\newtheorem{lemma}{Lemma}[section] 
\newtheorem{definition}[lemma]{Definition} 
\newtheorem{theorem}[lemma]{Theorem}
\newtheorem{proposition}[lemma]{Proposition}
\newtheorem{prp-df}[lemma]{Proposition and Definition}
\newtheorem{corollary}[lemma]{Corollary} 
\newlength{\eqdemoffset}\setlength{\eqdemoffset}{-4ex}
\newlength{\enumrkoffset}\setlength{\enumrkoffset}{-3.5ex}
\newenvironment{proof}[1][Proof] {\begin{trivlist}\item[]{\sc #1.}~}
  {\ifhmode{\unskip~\nobreak}\else%
    {\nopagebreak\vspace{\eqdemoffset}\leavevmode\hfill}\fi%
    \hfill$\blacksquare$\end{trivlist}}
\newenvironment{rk}[1]{\refstepcounter{lemma}%
  \begin{trivlist}\item[]{\sc #1 \arabic{section}.\arabic{lemma}}~}%
  {\ifhmode{\unskip~\nobreak}\else%
    {\nopagebreak\vspace{\enumrkoffset}\leavevmode\hfill}\fi%
    \hfill$\square$\end{trivlist}}
\newcommand{\ds}{\displaystyle}
\newcommand{\ts}{\textstyle}
\newcommand{\Cat}{\mathcal{C}}
\newcommand{\Irr}{\mathrm{Irr}~}
\newcommand{\Mor}{\mathrm{Mor}~}
\newcommand{\Dom}{\mathrm{Dom}~}
\newcommand{\Tr}{\mathrm{Tr}~}
\renewcommand{\Re}{\mathrm{Re}~}
\renewcommand{\Im}{\mathrm{Im}~}
\newcommand{\Dir}{{\mathcal{D}}}
\renewcommand{\iff}{\textbf{iff} } 
\newcommand{\Cst}{$C^*$-\relax}
\newcommand{\tens}{\makebox[.8em][c]{$\otimes$}}
\newcommand{\tensexp}{\kern -.05em\otimes\kern -.05em}
\newcommand{\id}{{\rm id}}
\newcommand{\rond}{\makebox[1em][c]{$\circ$}}
\newcommand{\CC}{\mathbb C} 
\newcommand{\RR}{\mathbb R} 
\newcommand{\ZZ}{\mathbb Z}
\newcommand{\NN}{\mathbb N} 
\newcommand{\Ss}{\mathcal{S}}
\newcommand{\Hh}{\mathcal{H}}
\newcommand{\Ff}{\mathcal{F}}
\newcommand{\Cc}{\mathrm{Conv}}
\newcommand{\Tt}{\mathcal{T}}
\newcommand{\degree}{$^{\textrm{\footnotesize o}}$}
\begin{document}

\title{The Property of Rapid Decay \\ for Discrete Quantum Groups}
\author{Roland {\sc Vergnioux}}
\date{}

\maketitle

\begin{abstract}
  We introduce the Property of Rapid Decay for discrete quantum groups by equivalent
  characterizations that generalize the classical ones. We then investigate examples,
  proving in particular the Property of Rapid Decay for unimodular free quantum groups. We
  finally check that the applications to the $K$-theory of the reduced group \Cst algebras
  carry over to the quantum case. \\
  \indent Keywords: quantum groups, rapid decay, group $C^*$-algebras \\
  \indent MSC~2000: 58B32 (46L80, 43A17, 46H30)
\end{abstract}

Let $\Gamma = F_N$ be the free group on $N$ generators, and denote by $l(\gamma)$ the
length of a reduced word $\gamma \in \Gamma$. In the founding paper \cite{Haagerup:MAP},
Haagerup proved that the norm of the reduced group \Cst algebra $C^*_r(\Gamma)$ can be
controlled by Sobolev $\ell^2$-norms associated to $l$:
\begin{displaymath}
  \forall x\in \CC\Gamma ~~ ||x||_{C^*_r\Gamma} \leq C\, ||x||_{2,s} := C\, 
  \left(\ts\sum_{\gamma\in\Gamma}\ds |(1+l(\gamma))^s x_\gamma|^2\right)^{1/2}
\end{displaymath}
for suitable constants $C$, $s>0$. This is remarkable since on the other hand the norm of
$C^*_r(\Gamma)$ always dominates the (non-weighted) $\ell^2$-norm on $\CC \Gamma$.
Moreover the norm of $C^*_r(\Gamma)$ is a non-trivial and very interesting data, whereas
the Sobolev norms are easily computable.

In fact this phenomenon occurs in many more cases and we say that a discrete
group $\Gamma$ endowed with a length function $l$ has the {\em Property of Rapid
  Decay} (Property RD) if the inequality above is satisfied on $\CC\Gamma$ for
some constants $C$, $s$. This general notion was introduced and studied by
Jolissaint in \cite{Jolissaint:RD_df}, where many examples are also presented.

Among the various applications of this property, let us mention the one
concerning $K$-theory, which is interesting from the point of view of
noncommutative geometry: thanks to the control on the norm of $C^*_r(\Gamma)$
provided by Property RD one can prove that the $K$-theory of $C^*_r(\Gamma)$
equals the $K$-theory of certain dense convolution subalgebras of rapidly
decreasing functions on $\Gamma$ \cite{Jolissaint:RD_K_th}. This fact was
notably used by V.~Lafforgue in his approach to the Baum-Connes conjecture via
Banach $KK$-theory \cite{Lafforgue:RD,Lafforgue:Baum-Connes}.

\bigskip

The foundations of the theory of discrete quantum groups are now very well
understood \cite{Woro:matrix,EffrosRuan:discrete,vanDaele:discrete,Woro:houches}, 
and it is a general motivation to figure out whether the classical operator
algebraic properties of discrete groups can also be useful in the quantum
framework. In this article we address this question for the Property of Rapid
Decay.

As we will see, there is a quite natural way to extend the definition of Property RD to
discrete quantum groups, and one can prove that the applications to $K$-theory still work
in the general case. The natural candidates for interesting quantum examples are the free
quantum groups of Wang-Banica \cite{DaeleWang:univ,Banic:U(n)}, and we will show that the
unimodular ones indeed have Property RD: this is the quantum analogue of the founding
result of Haagerup.

\bigskip

The first section of this article summarizes definitions and facts about discrete quantum
groups that are needed in the sequel. In the second section we give a definition of Property RD
for discrete quantum groups, establishing the equivalence between various characterizations
that generalize the classical ones.

In the third section, we investigate the main known classes of examples. Amenable discrete
quantum groups have Property RD \iff they have polynomial growth and this is the case of duals
of connected compact Lie groups, see Section~\ref{sec:amenable}. On the other hand it turns out
that Property RD implies unimodularity: this results from a necessary condition that we
introduce in Section~\ref{sec:non-unimod}. In Section~\ref{sec:free_quantum} we address the
case of the free quantum groups. The previous necessary condition is then sufficient --- in the
unitary case, this is an adaptation of the classical proof of Haagerup ---, and it is true in
the unimodular case --- this is the ``purely quantum'' part of the proof.

Finally we prove in the fourth section that the application to $K$-theory mentioned above still
holds in the quantum case. We deal with the case of the Banach algebras $\hat H_L^s$, with $s$
big enough, and also with $\hat H_L^\infty$, which is a smaller dense subalgebra but not a
Banach algebra anymore.

\bigskip

Let us conclude this introduction with a remark. In the original paper of Haagerup the Property
of Rapid Decay was used in conjunction with the fact that $l$ is conditionnally of negative
type to show that $C^*_r(F_N)$ has the Metric Approximation Property --- although it does not
have the Completely Positive Approximation Property since $F_N$ is not amenable. In the case of
the free quantum groups however one can show that the natural length, which satisfies Property
RD, is not conditionnally of negative type in the appropriate sense.

\section{Notation} \label{sec:notations}

In this section we introduce the notation and results about discrete quantum
group that are needed in the article. Having in mind the study of the Property
of Rapid Decay, it is natural to use the framework of Hopf \Cst algebras
\cite{Vallin:hopf}. The global situation to be considered consists in two
``dual'' Hopf \Cst algebras $(S,\delta)$ and $(\hat S,\hat \delta)$ related by a
multiplicative unitary $V\in M(\hat S\tens S)$ of discrete type. In fact this
discrete situation can be axiomatized equally at the level of $S$, $\hat S$ or
$V$:
\begin{itemize}
\item $(\hat S, \hat \delta)$ is a bisimplifiable unital Hopf \Cst algebra
  \cite{Woro:matrix,Woro:houches}.
\item $V$ is a regular multiplicative unitary with a unique co-fixed line
  \cite{BaajSkand:unit}.
\item $(S,\delta)$ is the completion of a multiplier Hopf $*$-algebra which is a 
  direct sum of matrix algebras \cite{vanDaele:multhopf,vanDaele:discrete}.
\end{itemize}
Notice also that these notions fit in the theory of locally compact quantum
groups \cite{KustVaes:lcqg}, yielding exactly the discrete case. Moreover, they
include many families of interesting examples, such as discrete groups, duals of
compact Lie groups and their $q$-deformations, unitary and orthogonal free
quantum groups. References and basic facts about these examples will be
presented along the text of the article as they are investigated.

To be more precise, and since it will be convenient for Property RD to have the
\Cst algebras $S$ and $\hat S$ represented on the same Hilbert space right from
the begining, let us start from the multiplicative unitary. It is a unitary
element of $B(H\tens H)$ for some Hilbert space $H$, such that
$V_{12}V_{13}V_{23} = V_{23}V_{12}$ on $H\tens H\tens H$.  Regularity means that
the norm closure of $(\id\tens B(H)_*)(\Sigma V)$ coincides with $K(H)$, where
$\Sigma$ is the flip operator, here on $H\tens H$. A co-fixed vector is an
element $e\in H$ such that $V(\zeta\tens e) = \zeta\tens e$ for all $\zeta\in
H$. The Hopf \Cst algebra $(S,\delta)$ can then be defined by
\begin{eqnarray*}
  &&S = \overline{\mathrm{Lin}} \{(\omega\tens\id)(V) ~|~ 
  \omega\in B(H)_*\} \text{~~~and} \\
  &&\forall a\in S~~~ \delta(a) = V (a\tens 1) V^* \text{.}
\end{eqnarray*} 
It follows from the work of Baaj, Skandalis \cite{BaajSkand:unit} and Podle\'s,
Woronowicz \cite{Woro:lorenz} that $(S, \delta)$ is indeed a Hopf \Cst algebra
which admits left and right Haar weights $h_L$, $h_R$.

\bigskip

The involutive monoidal category $\Cat$ of finite-dimensional unitary
representations of $S$ is a major tool when investigating in detail the
properties of $S$ and $\hat S$ \cite{Woro:dual}. We will denote by $H_\alpha$
the space of the representation $\alpha \in \Cat$. Choosing a complete set
$\Irr\Cat$ of representatives of the irreducible representations, we have
\begin{displaymath}
  S \simeq \bigoplus_{\alpha\in\Irr\Cat} L(H_\alpha) \text{.}
\end{displaymath}
We will denote by $\Ss$ the algebraic direct sum of the matrix algebras $L(H_\alpha)$, by
$p_\alpha \in S$ the central support of $\alpha\in\Irr\Cat$, and by $\Hh$ the algebraic direct
sum of the f.-d. subspaces $p_\alpha H$.

The category of f.-d. representations of $S$ can be endowed with a tensor product
$\alpha\tens\beta := (\alpha\tens\beta) \rond \delta$ and with a conjugation which we describe
now. Let $(e_i)$ be an orthonormal basis of $H_\alpha$. The conjugate object $\bar\alpha$ of
$\alpha\in \Cat$ is characterized, up to equivalence, by the existence of a conjugation map
$j_\alpha : H_\alpha \to H_{\bar\alpha}$, $\zeta \mapsto \bar\zeta$ which is an
anti-isomorphism such that ${t_\alpha : 1 \mapsto \sum e_i\tens \bar e_i}$ and $t'_\alpha :
\bar\zeta \tens \xi \mapsto (\zeta | \xi)$ are resp. elements of $\Mor (1, \alpha\tens \bar
\alpha)$ and $\Mor (\bar\alpha \tens\alpha, 1)$. If $\alpha$ is irreducible, the conjugation
map $j_\alpha$ is unique up to a scalar and one can renormalize it in such a way that $\Tr
j_\alpha^*j_\alpha = \Tr (j_\alpha^*j_\alpha)^{-1}$ and $j_{\bar\alpha} j_\alpha =\pm 1$.

The positive invertible maps $j_\alpha^*j_\alpha$ do not depend on the normalized
$j_\alpha$ and describe the non-trivial interplay between the involution of $\Cat$ and its
hilbertian structure. Putting them together, we obtain the modular element, a positive
unbounded multiplier $F \in S^\eta$ such that $p_\alpha F = j_\alpha^*j_\alpha$ for every
$\alpha\in\Irr\Cat$. One can show that $\delta(F) = F\tens F$, this is equivalent to the
fact that the restrictions of $j_{\alpha\tensexp\beta} = \Sigma \rond (j_\alpha\tens
j_\beta)$ to the irreducible subspaces of $H_\alpha\tens H_\beta$ are normalized
conjugation maps if $j_\alpha$, $j_\beta$ are so. 

The Haar weights of $(S,\delta)$ admit the following simple expressions in terms of $F$:
\begin{equation} \label{eq:expr_haar}
  \forall a\in p_\alpha S~~~ h_L(a) = m_\alpha \Tr (F^{-1}a) 
  \text{~~~and~~~} h_R(a) = m_\alpha \Tr (Fa) \text{,}
\end{equation}
where the positive number $m_\alpha = \Tr p_\alpha F = \Tr p_\alpha F^{-1}$ is called the
quantum dimension of $\alpha$. We say that $(S,\delta)$ is unimodular if $F = 1$.

The antipode $\kappa : \Ss \to \Ss$ is a linear and antimultiplicative map such that
$\alpha \rond \kappa \simeq \bar\alpha$ for all $\alpha\in\Cat$, and the co-unit
$\varepsilon\in S'$ equals the trivial representation $1_\Cat \in \Irr\Cat$. Let us recall
from \cite{Woro:lorenz} the following classical identities:
\begin{eqnarray} \label{eq:hopf_struct_coinv}
  &m \rond (\kappa \tens \id) \rond \delta = m \rond (\id \tens \kappa) \rond \delta 
  = 1_S \varepsilon \text{,}& \\ \label{eq:hopf_struct_counit}
  &(\varepsilon\tens\id) \rond  \delta = (\id\tens\varepsilon) \rond  \delta 
  = \id \text{~~~ and ~~~} \delta\rond\kappa = \sigma \rond (\kappa\tens\kappa) \rond
  \delta \text{,}& 
\end{eqnarray}
where $m : \Ss\tens\Ss \to \Ss$ denotes the product of $\Ss$ and $\sigma : \Ss\tens\Ss \to
\Ss\tens\Ss$ is the flip map.

\bigskip

The reduced dual Hopf \Cst algebra of $(S,\delta)$ will play an important role
in this paper. It is defined from the left leg of the multiplicative unitary $V$
under consideration:
\begin{eqnarray*}
  &&\hat S = \overline{\mathrm{Lin}} \{(\id\tens\omega)(V) ~|~ 
  \omega\in B(H)_*\} \text{~~~and} \\
  &&\forall \hat a\in \hat S~~~ \hat\delta(\hat a) = 
  V^* (1\tens \hat a) V \text{.}
\end{eqnarray*}
The pair $(\hat S, \hat\delta)$ is then the (unital) Hopf \Cst algebra of a compact quantum
group \cite{Woro:houches}. In particular it admits a two-sided Haar state $\hat h$ which is in
our setting the vector state associated to any co-fixed unit vector $e\in H$.  We say that
$(S,\delta)$ is amenable if $(\hat S,\hat\delta)$ admits a continuous co-unit.

We will consider the Fourier transform defined on the dense subspace $\Ss \subset S$ in
the following way:
\begin{displaymath}
  \forall a\in\Ss ~~~ \Ff(a) = (\id\tens h_R)(V^*(1\tens a)) \in \hat S \text{,}
\end{displaymath}
and we denote by $\hat\Ss$ the image of $\Ff$, a dense subspace of $\hat S$.
Observe that other choices are possible for the definition of $\Ff$. One can
check that our choice makes $\Ff$ isometric with respect to the GNS norms
associated to $h_R$ and $\hat h$. More precisely, let $e$ be a co-fixed unit
vector for $V$ and put $\Lambda(a) = \Ff(a)e$ for $a\in\Ss$, this defines a GNS
map $\Lambda$ for $h_R$ such that $\Lambda(ab) = a \Lambda(b)$ and $V
(\Lambda\tens\Lambda) (a\tens b) = (\Lambda\tens\Lambda) (\delta(a)(1\tens b))$
for any $a$, $b\in\Ss$.

\bigskip

All the unbounded operators we have used in this introduction are of an almost
trivial kind: they are self-adjoint and affiliated to $S$ or $S\tens S$. Due to
the very special structure of $S$, operators affiliated to $S$, which are closed
by definition, admit $\Hh$ as a core and they form a {$*$-algebra} $\Ss^\eta$
which identifies with the algebra of (algebraic) multipliers of the Pedersen
ideal $\Ss$.  Moreover in this case symmetry implies self-adjointness. Notice
that one can apply $*$-homomorphisms such as $\delta$ and $\varepsilon$ to
elements of $S^\eta$, but also proper maps such as the antipode $\kappa$
\cite{Woro:lorenz}.

\section{The Property of Rapid Decay}

\subsection{Lengths}

Before introducing the Property of Rapid Decay it is necessary to discuss the notion of length
for discrete quantum groups. In particular we establish at Lemmas~\ref{lem:domin}
and~\ref{lem:triangles} some elementary properties of these lengths.

\begin{definition} \label{df:length}
  Let $(S,\delta)$ be the Hopf \Cst algebra of a discrete quantum group. A length on
  $(S,\delta)$ is an unbounded multiplier $L\in S^\eta$ such that $L\geq 0$,
  $\varepsilon(L) = 0$, $\kappa (L)_{|\Hh} = L_{|\Hh}$ and $\delta(L) \leq 1\tens L +
  L\tens 1$.  Given such a length, we denote by $p_n \in M(S)$ the spectral projection of
  $L$ associated to the interval $[n, n+1[$, for $n\in\NN$.
\end{definition}

\begin{rk}{Example} \label{rk:central}
  \begin{enumerate}
  \item \label{item:central} A complex-valued function $l$ on $\Irr\Cat$ will be
    called a length function if we have for any $\alpha$, $\beta$, $\beta' \in
    \Irr\Cat$
    \begin{eqnarray*}
      &l(\alpha) \geq 0 \text{,~~} l(1_\Cat) = 0 \text{,~~}
      l(\bar\alpha) = l(\alpha) \text{~~and}& \\
      &\alpha\subset \beta\tens\beta' ~~ \Longrightarrow
      l(\alpha) \leq l(\beta) + l(\beta') \text{.}&
    \end{eqnarray*}
    It is easy to observe that $L = \sum l(\alpha)p_\alpha$ is then a central length on
    $(S,\delta)$, and that all of these are obtained in this way. Notice that $\alpha
    \subset \beta\tens\beta'$ $\Longleftrightarrow$ $(p_\beta \tens p_{\beta'})
    \delta(p_\alpha) \neq 0$, by definition of the representation $\beta\tens\beta'$.
    
  \item \label{item:finite_gen} We say that $(S,\delta)$ is finitely generated if there exists
    a finite subset $\Dir = \bar\Dir \subset \Irr\Cat$, not containing $1_\Cat$, such that any
    element of $\Irr\Cat$ is contained in a multiple tensor product of elements of $\Dir$. The
    distance to the origin in the classical Cayley graph associated to $(\hat S, \Dir)$, in the
    sense of \cite{Vergnioux:cayley}, defines then a length function on $\Irr\Cat$ with values
    in $\NN$ given by
    \begin{displaymath}
      l(\alpha) = \min \{k\in\NN ~|~ \exists \beta_1, \ldots,
      \beta_k \in \Dir ~~~ \alpha \subset \beta_1 \tens \cdots \tens
      \beta_k \} \text{.}
    \end{displaymath}
    The corresponding central length $L_0$ on $(S,\delta)$ is called the word length of
    $(S,\delta)$ with respect to $\Dir$.
  \end{enumerate}
\end{rk}

\begin{lemma} \label{lem:domin}
  Let $L_0$ and $L$ be lengths on $(S,\delta)$, and assume that $L_0$ is a word
  length. Then there exists $\epsilon>0$ such that $L_0 \geq \epsilon L$.
\end{lemma}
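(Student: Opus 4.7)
The plan is to reduce the inequality to a numerical one on $\Irr\Cat$. For each $\alpha \in \Irr\Cat$ the restriction $Lp_\alpha$ is a bounded positive operator on the finite-dimensional space $H_\alpha$, so I would set $l(\alpha) := \|Lp_\alpha\|$; by construction $Lp_\alpha \leq l(\alpha)\,p_\alpha$. Writing $L_0 = \sum l_0(\alpha)\, p_\alpha$ in terms of the $\NN$-valued word-length function $l_0$ from Example~\ref{rk:central}(\ref{item:finite_gen}), it will suffice to prove $l(\alpha) \leq M\, l_0(\alpha)$ for all $\alpha \in \Irr\Cat$ and some constant $M>0$; summing then gives $L \leq M L_0$, so $\epsilon = 1/M$ does the job (with $\epsilon=1$ in the trivial case $L=0$).

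The key step will be the subadditivity $l(\alpha) \leq l(\beta) + l(\gamma)$ whenever $\alpha \subset \beta \tens \gamma$. To obtain it I would multiply the defining inequality $\delta(L) \leq 1\tens L + L\tens 1$ on both sides by $p_\beta \tens p_\gamma$, so as to restrict it to the finite-dimensional block $H_\beta \tens H_\gamma$; the right-hand side becomes $Lp_\beta \tens p_\gamma + p_\beta \tens Lp_\gamma$, whose operator norm is at most $l(\beta) + l(\gamma)$. Compressing by an isometric intertwiner $v \in \Mor(\alpha, \beta\tens\gamma)$ and using the standard identity $v^*\delta(a)v = a\,p_\alpha$ (compatibility of the coproduct with morphisms of corepresentations, which is how $\tens$ is defined on $\Cat$), one gets $Lp_\alpha \leq (l(\beta)+l(\gamma))\, p_\alpha$, giving the asserted subadditivity.

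Iterating this yields $l(\alpha) \leq \sum_{i=1}^k l(\beta_i)$ whenever $\alpha \subset \beta_1\tens\cdots\tens\beta_k$ with $\beta_i \in \Dir$. Since $\Dir$ is finite, $M := \max_{\beta \in \Dir} l(\beta)$ is finite, and minimizing over decompositions gives $l(\alpha) \leq M\, l_0(\alpha)$, as required. The main technical point is justifying $v^*\delta(L)v = Lp_\alpha$ when $L$ is an unbounded multiplier; the cleanest way is to truncate $L$ via the spectral projections $p_n$ from Definition~\ref{df:length} (so that $L\sum_{n\leq N} p_n$ is bounded and the identity reduces to the usual corepresentation statement) and then pass to the limit on the finite-dimensional block $H_\beta \tens H_\gamma$.
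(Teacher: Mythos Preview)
Your proof is correct and takes essentially the same route as the paper's: both bound $\|Lp_\alpha\|$ by $C\,l_0(\alpha)$ with $C=\max_{\beta\in\Dir}\|Lp_\beta\|$ by iterating the inequality $\delta(L)\leq L\tens 1+1\tens L$ over a minimal decomposition $\alpha\subset\beta_1\tens\cdots\tens\beta_n$ with $\beta_i\in\Dir$. The only cosmetic difference is how the bound is read off on the $\alpha$-block: the paper uses that the $*$-homomorphism $(p_0+p_1)^{\tensexp n}\delta^{n-1}$ is injective on the simple algebra $p_\alpha S$, hence isometric, whereas you compress by an isometric intertwiner $v\in\Mor(\alpha,\beta\tens\gamma)$ --- two formulations of the same fact.
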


\begin{proof}
  Using the spectral projections of $L_0$, put $p=p_0 + p_1 \in Z(\Ss)$ and $C =
  ||Lp||$. For any $n\in\NN^*$ we have then
  \begin{displaymath}
    p^{\tensexp n}\delta^{n-1}(L) \leq p^{\tensexp n} (L\tens 1\tens \cdots + 
    1\tens L\tens 1\tens\cdots + \cdots + 1\tens1\tens\cdots\tens L)  \leq Cn \text{.}
  \end{displaymath}
  Take $\alpha\in\Irr\Cat$ such that $l(\alpha) = n$ : we have $p^{\tensexp n}
  \delta^{n-1} (p_\alpha) \neq 0$. The \Cst algebra $p_\alpha S$ being simple,
  this implies that the restriction of $p^{\tensexp n}\delta^{n-1}$ to $p_\alpha
  S$ is an isometry. In particular we can write
  \begin{eqnarray*}
    L p_\alpha \leq ||L p_\alpha|| p_\alpha &=& 
    ||p^{\tensexp n} \delta^{n-1} (Lp_\alpha)|| p_\alpha \\ &\leq&
     ||p^{\tensexp n} \delta^{n-1} (L)|| p_\alpha \leq C n p_\alpha 
     = C L_0 p_\alpha \text{.}
  \end{eqnarray*}
  Since this holds for any minimal central projection $p_\alpha$, the desired result is
  proved with $\epsilon = 1/C$. 
\end{proof}

\begin{lemma} \label{lem:triangles}
  Let $L$ be a central length on $(S,\delta)$. Denote by $\Tt \subset \NN^3$ the
  set of triples $(k,l,n)$ such that $\delta(p_n) (p_k\tens p_l) \neq 0$.
  \begin{enumerate}
  \item \label{item:lem_it} $\delta(L) \geq 1\tens L - L\tens 1$ and $\delta(L)
    \geq L\tens 1-1\tens L$.
  \item \label{item:lem_tri_sym} $\Tt$ is stable under permutation and contains
    $(n,n,0)$ for any $n\in\NN$.
  \item \label{item:lem_tri_it} $(k,l,n) \in \Tt$ $\Longrightarrow$ $|k-l|-2 \leq n \leq
    |k+l|+2$.
  \end{enumerate}
\end{lemma}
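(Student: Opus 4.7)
Since $L$ is central, write $L = \sum_{\alpha\in\Irr\Cat} l(\alpha)\,p_\alpha$ for the associated length function on $\Irr\Cat$. The key observation is that on the $\gamma$-isotypic component of $p_\alpha\tens p_\beta \subset S\tens S$, the operators $\delta(L)$, $L\tens 1$ and $1\tens L$ act as scalar multiplication by $l(\gamma)$, $l(\alpha)$ and $l(\beta)$ respectively, so all three statements reduce to combinatorial inequalities on $l$. The technical tool will be Frobenius reciprocity, which produces the $S_3$-symmetry of the condition $\gamma\subset\alpha\tens\beta$: it is equivalent to $\beta\subset\bar\alpha\tens\gamma$ and to $\alpha\subset\gamma\tens\bar\beta$.

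For (i), positivity of $\delta(L) + L\tens 1 - 1\tens L$ translates to $l(\gamma) + l(\alpha) \geq l(\beta)$ for every $\gamma\subset\alpha\tens\beta$. Applying Frobenius to rewrite this as $\beta\subset\bar\alpha\tens\gamma$, the triangle inequality for $l$ gives $l(\beta) \leq l(\bar\alpha) + l(\gamma) = l(\alpha) + l(\gamma)$, where the second equality is $l(\bar\alpha) = l(\alpha)$ (which is precisely the content of the condition $\kappa(L)_{|\Hh} = L_{|\Hh}$). The other inequality $\delta(L) \geq L\tens 1 - 1\tens L$ is symmetric, using $\alpha\subset\gamma\tens\bar\beta$.

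For (ii), $(k,l,n)\in\Tt$ is equivalent to the existence of irreducibles $\alpha,\beta,\gamma$ with $l(\alpha)\in[k,k+1)$, $l(\beta)\in[l,l+1)$, $l(\gamma)\in[n,n+1)$ and $\gamma\subset\alpha\tens\beta$. The two Frobenius equivalences above, together with $l(\bar\alpha)=l(\alpha)$, show that $\Tt$ is stable under the transpositions $(k,l,n)\mapsto(k,n,l)$ and $(k,l,n)\mapsto(n,l,k)$, which generate $S_3$. For the triple $(n,n,0)$: whenever $p_n\neq 0$, choose $\alpha$ with $p_\alpha\leq p_n$; then the standard inclusion $1_\Cat\subset\alpha\tens\bar\alpha$, combined with $l(1_\Cat)=0$ and $l(\bar\alpha)=l(\alpha)\in[n,n+1)$, provides the witness.

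For (iii), writing $a=l(\alpha)$, $b=l(\beta)$, $c=l(\gamma)$ for witnesses of $(k,l,n)\in\Tt$, the triangle inequality gives the upper bound $n\leq c\leq a+b<k+l+2$. The lower bound comes from (i): $c\geq|a-b|$, and since $a\in[k,k+1)$ and $b\in[l,l+1)$ one has $|a-b|>|k-l|-1$, whence $n>c-1>|k-l|-2$. The only substantive step is the isotypic-decomposition argument of the first paragraph; once $\delta(L)$ is recognized as encoding the scalar length inequalities on tensor product representations, everything else is a direct manipulation of Frobenius reciprocity together with the length axioms.
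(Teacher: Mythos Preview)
Your proof is correct and rests on the same underlying idea as the paper's --- namely Frobenius reciprocity, i.e.\ the equivalences $\gamma\subset\alpha\tens\beta \Leftrightarrow \beta\subset\bar\alpha\tens\gamma \Leftrightarrow \alpha\subset\gamma\tens\bar\beta$ --- but the packaging differs in two places worth noting.

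For part~(ii), the paper works purely at the Hopf-algebraic level: it obtains $(k,l,n)\leftrightarrow(l,k,n)$ from the identity $\delta\rond\kappa = \sigma\rond(\kappa\tens\kappa)\rond\delta$ together with $\kappa(p_n)=p_n$, and obtains $(k,l,n)\leftrightarrow(k,n,l)$ by applying $(m\tens\id)(\kappa\tens\id\tens\id)(\id\tens\delta)$ to $\delta(p_n)(p_k\tens p_l)$ and invoking the antipode axiom. You instead translate $(k,l,n)\in\Tt$ into the existence of witnesses $\alpha,\beta,\gamma$ and invoke Frobenius reciprocity in the category $\Cat$. These are two faces of the same coin; your route is more transparent representation-theoretically, while the paper's stays closer to the algebraic structures actually used elsewhere in the article.

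For the lower bound in part~(iii), the paper proves only $n\leq k+l+2$ directly and then appeals to the $S_3$-symmetry of~(ii) to permute the roles and obtain $l\leq k+n+2$ and $k\leq n+l+2$. You instead read the lower bound straight off part~(i), via $c\geq|a-b|$ with $a=l(\alpha)$, $b=l(\beta)$, $c=l(\gamma)$; this in fact yields the slightly sharper $n\geq|k-l|-1$. Finally, you supply an argument for $(n,n,0)\in\Tt$ (via $1_\Cat\subset\alpha\tens\bar\alpha$), which the paper's proof leaves implicit; your caveat ``whenever $p_n\neq 0$'' is well placed, since the claim is vacuous otherwise.
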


\begin{proof}
  \ref{item:lem_it}. Let $l$ be the length function on $\Irr \Cat$ corresponding
  to $L$ like in Example~\ref{rk:central}.\ref{item:central}. For the first
  inequality it is enough to prove that, for any inclusion $\alpha \subset
  \beta\tens\beta'$:
  \begin{eqnarray*}
    (p_\beta\tens p_{\beta'}) \delta(p_\alpha) \delta(L) &\geq& 
    (p_\beta\tens p_{\beta'}) \delta(p_\alpha) (1\tens L - L\tens 1) \\
    \Longleftrightarrow ~~ l(\alpha) &\geq& l(\beta') - l(\beta).
  \end{eqnarray*}
  This results from the equivalence $\alpha \subset \beta\tens\beta'$
  $\Longleftrightarrow$ $\beta' \subset \bar\beta\tens\alpha$ and the fact that
  $l$ is a length function. Similarly the second inequality results from the
  other equivalent inclusion $\beta\subset \alpha\tens\bar\beta'$.

  \ref{item:lem_tri_sym}. First notice that the projections $p_n$ are central. Using the
  identity~(\ref{eq:hopf_struct_counit}) and the fact that $\kappa(p_n) = p_n$, we can
  write
  \begin{eqnarray*}
    \delta(p_n) (p_k\tens p_l) &=& \delta(\kappa(p_n)) (p_k\tens p_l) = 
    \sigma(\kappa\tens\kappa)\delta(p_n) \,\cdot\, (p_k\tens p_l) \\ &=& 
    \sigma(\kappa\tens\kappa) \left((p_l\tens p_k)\delta(p_n)\right) \text{,}
  \end{eqnarray*}
  hence $(k,l,n) \in \Tt \Longrightarrow (l,k,n)\in\Tt$. On the other hand,
  using the identities (\ref{eq:hopf_struct_coinv}) and
  (\ref{eq:hopf_struct_counit}) we have
  \begin{eqnarray*}
    && (m\tens\id)(\kappa\tens\id\tens\id)(\id\tens\delta) ~ (\delta(p_n)(p_k\tens p_l)) = \\
    && \makebox[2cm]{} = (m\tens\id)(\kappa\tens\id\tens\id) ~ 
    (\delta^2(p_n) (p_k\tens\delta(p_l))) \\
    && \makebox[2cm]{} = (p_k\tens 1) ~ [(m\tens\id)(\kappa\tens\id\tens\id)\delta^2(p_n)] ~ 
    \delta(p_l) \\
    && \makebox[2cm]{} = (p_k\tens 1) (1\tens p_n) \delta(p_l)
  \end{eqnarray*}
  hence $(k,l,n) \in \Tt \Longrightarrow (k,n,l)\in\Tt$.
  
  \ref{item:lem_tri_it}. By definition one has $np_n\leq Lp_n \leq (n+1)p_n$, so that
  \begin{eqnarray*}
    \delta(L)\delta(p_n) (p_k\tens p_l) &\geq& n \delta(p_n) (p_k\tens p_l) \text{~~~and}
    \\ \delta(L)\delta(p_n) (p_k\tens p_l) &\leq& 
    (L\tens 1 + 1\tens L )\delta(p_n) (p_k\tens p_l) \\
    &\leq& (k+l+2) \delta(p_n) (p_k\tens p_l) \text{.}
  \end{eqnarray*}
  Now if $(k,l,n) \in \Tt$, the projection $\delta(p_n) (p_k\tens p_l)$ is non-zero and
  the above inequalities show that $n\leq k+l+2$. Applying this result to $(k,n,l)$ and
  $(n,l,k)$ yields the two other inequalities of the statement. 
\end{proof}

\subsection{Definition}

We now introduce the Property of Rapid Decay for discrete quantum groups with respect to a
central length. Like in the classical case, it is mainly about controlling the norm of the
reduced dual \Cst algebra by Sobolev norms, which are much simpler. If $L$ is a length on
$(S,\delta)$ and $s\in\RR_+$ we will use the following notation, where $e\in H$ is a co-fixed
unit vector:
\begin{eqnarray*}
  \forall a\in\Ss && ||a||_2^2 := h_R(a^*a) \text{,~~~}
  ||a||_{2,s} := ||(1+L)^s a||_2 \text{~~~ and} \\
  \forall \hat a\in\hat\Ss && ||\hat a||_2^2 := \hat h(\hat a^*\hat a)
  =||\hat a e||^2 \text{,~~~}
  ||\hat a||_{2,s} := ||(1+L)^s \hat a e|| \text{.}
\end{eqnarray*}
We will also denote by $H_L^s$ (resp. $\hat H_L^s$) the completion of $\Ss$ (resp.
$\hat\Ss$) with respect to the $2,s$-norm. We have the norm inequalities $||a||_2 \leq
||a||_{2,s}$ and $||a||_2 = ||\Ff(a)||_2 \leq ||\Ff(a)||$ and hence the following
continuous embeddings:
\begin{displaymath}
  \xymatrix@R-3ex{
    \raisebox{0pt}[2.5ex][1ex]{$H_L^s~$} \ar@{=}[d]^\Ff \ar@{^(->}[r] & 
    \raisebox{0pt}[2.5ex][1ex]{$~H~$} \ar@{=}[d]^\Ff \ar@{^(->}[r] & 
    \raisebox{0pt}[2.5ex][1ex]{$~S$} \\ 
    \raisebox{0pt}[2.5ex][1ex]{$\hat H_L^s~$} \ar@{^(->}[r] & 
    \raisebox{0pt}[2.5ex][1ex]{$~\hat H~$} & 
    \raisebox{0pt}[2.5ex][1ex]{$~\hat S$} \ar@{_(->}[l] }
\end{displaymath}

\begin{prp-df} \label{df:RD}
  Let $L$ be a central length on the Hopf \Cst algebra $(S, \delta)$ of a discrete quantum
  group. We say that $(S,\delta,L)$ has the Property of Rapid Decay (Property RD) if one
  of the following equivalent conditions is satisfied:
  \begin{enumerate}
  \item \label{item:df_conv} $\exists~ C, s \in \RR_+ ~~ \forall a\in\Ss ~~~ ||\Ff(a)||
    \leq C ||a||_{2,s}$
  \item \label{item:df_dual} $\exists~ C, s \in \RR_+ ~~ \forall \hat a\in\hat\Ss ~~~
    ||\hat a|| \leq C ||\hat a||_{2,s}$
  \item \label{item:df_rapid} $\hat H_L^\infty := \bigcap_{s\geq 0} \hat H_L^s \subset
    \hat S$ (as subspaces of $H$)
  \item \label{item:df_poly} $\exists~ P \in \RR[X] ~~ \forall n\in\NN,~ a\in p_n\Ss ~~~
    ||\Ff(a)|| \leq P(n) ||a||_2$
  \item \label{item:df_bloc} $\exists~ P \in \RR[X] ~~ \forall n\in\NN,~ a\in p_n\Ss ~~
    \forall k, l \in\NN ~~~ ||p_l \Ff(a) p_k|| \leq P(n) ||a||_2$
  \end{enumerate}
  We say that $(S, \delta)$ has Property RD if there exists a central length on it
  satisfying one of these conditions.
\end{prp-df}

\begin{proof}
  \ref{item:df_conv} $\Longleftrightarrow$ \ref{item:df_dual} is clear because $\Ff$ is a
  bijection between $\Ss$ and $\hat\Ss$ such that $\Lambda(a) = \Ff(a)e$.
  \ref{item:df_dual} $\Longrightarrow$ \ref{item:df_rapid} is immediate whereas
  \ref{item:df_rapid} $\Longrightarrow$ \ref{item:df_dual} follows, like in the classical
  case, from the closed graph Theorem \cite[{\S}3,~cor.~5]{Bourbaki:vect_top_val} for the
  inclusion of the Fr\'echet space $\hat H_L^\infty$ into $\hat S$. Note that the family
  of closed balls centered in $0$ with respect to all the norms $||\,\cdot\,||_{2,s}$
  forms a fundamental system of neighbourhoods of $0$ in $\hat H_L^\infty$.
  
  \ref{item:df_conv} $\Longrightarrow$ \ref{item:df_poly} $\Longrightarrow$
  \ref{item:df_bloc} are evident. For \ref{item:df_poly} $\Longrightarrow$
  \ref{item:df_conv}, choose $C, s \in \RR_+$ such that $P(n) \leq C (n+1)^s$ for all
  $n\in\NN$. We have then $||\Ff(a)|| \leq C \sum (n+1)^s ||p_na||_2$ and, like in the
  classical case \cite[lemma~1.5]{Haagerup:MAP}, we use the Cauchy-Schwartz inequality and
  the fact that $(n+1)^{s+1}||p_na||_2 \leq ||(L+1)^{s+1}p_n a||_2$ to conclude that
  $||\Ff(a)|| \leq C' ||a||_{2,s+1}$.
  
  For \ref{item:df_bloc} $\Longrightarrow$ \ref{item:df_poly} we will follow the proof of
  \cite[lemma~1.4]{Haagerup:MAP}, using the results of Lemma~\ref{lem:triangles}. Let $a$
  be an element of $p_n\Ss$. According to the following computation, we have $p_l \Ff(a)
  p_k \neq 0$ $\Longrightarrow$ $(k,l,n) \in \Tt$:
  \begin{eqnarray*}
    p_l \Ff(a) p_k &=& (\id\tens h_R) ((p_l\tens 1)V^*(p_k\tens a)) \\
    &=& (\id\tens h_R) (V^* \delta(p_l) (p_k\tens p_na)) \text{.}
  \end{eqnarray*}
  As a result we can write, for any $\xi\in H$:
  \begin{eqnarray*}
    ||\Ff(a)\xi||^2 &=& \sum_l ||p_l\Ff(a)\xi||^2 \leq 
    \sum_l (\ts\sum_k\ds ||p_l\Ff(a)p_k\xi||)^2 \\
    &\leq& P(n)^2 ||a||_2^2 \sum_l (\ts\sum_{(k,l,n)\in\Tt}\ds ||p_k\xi||)^2 \text{.}
  \end{eqnarray*}
  
  Moreover by Lemma~\ref{lem:triangles} the cardinal of $\{p ~|~ (n,q,p)\in\Tt\}$ is
  bounded above by $2 \min (q,n) + 5 \leq 2n+5$. Using this estimate twice and the
  Cauchy-Schwartz inequality we obtain
  \begin{eqnarray*}
    \sum_l (\ts\sum_{(k,l,n)\in\Tt}\ds ||p_k\xi||)^2 &\leq&
    \sum_l (2n+5) (\ts\sum_{(k,l,n)\in\Tt}\ds ||p_k\xi||^2) \\
    &\leq& (2n+5)^2 \ts\sum_k\ds ||p_k\xi||^2 = (2n+5)^2 ||\xi||^2 \text{.}
  \end{eqnarray*}
  Finally $||\Ff(a)|| \leq (2n+5) P(n) ||a||_2$ and Condition~\ref{item:df_poly} is
  satisfied. 
\end{proof}

\begin{rk}{Remark}
  Conditions \ref{item:df_conv}--\ref{item:df_poly} are still equivalent if $L$ is a
  non-central length on $(S,\delta)$ and hence they can be used to define a Property RD with
  respect to non-central lengths. Besides, if $L'$ is a central length on $(S,\delta)$ such
  that $L' \geq \epsilon L$ for some $\epsilon > 0$, it is easy to check that the Property RD
  for $L$ implies the Property RD for $L'$. Now, assume that $(S, \delta)$ is finitely
  generated and choose a word length $L_0$ on it. The preceeding observations and
  Lemma~\ref{lem:domin} show that $(S,\delta)$ has Property RD, possibly with respect to a
  non-central length, if and only if $(S,\delta,L_0)$ has Property RD. As a result, the use of
  central lengths for the study of Property RD is not a restriction in the finitely generated
  case.
\end{rk}

\begin{rk}{Example}
  When the \Cst algebra $S$ is commutative, it is of the form $c_0(\Gamma)$ with
  $\Gamma$ a discrete group, and the coproduct $\delta$ is given by the formula
  $\delta(f) (\alpha, \beta) = f(\alpha\beta)$. Then the notions of length and
  of Property RD studied in this paper coincide with the classical notions
  introduced by Jolissaint~\cite{Jolissaint:RD_df} after the founding paper of
  Haagerup~\cite{Haagerup:MAP} about the convolution algebras of the free group.
  For a recent account on Property RD for discrete groups, including examples,
  counter-examples and more references, we refer the reader to~\cite[chapter
  7]{Valette:livre_BC}.
\end{rk}

\section{Special cases and quantum examples}
\label{sec:examples}

\subsection{The amenable case}
\label{sec:amenable}

For an amenable discrete group, Property RD is equivalent to polynomial growth
\cite[cor.~3.1.8]{Jolissaint:RD_df}. In this section we extend this result to the case of
discrete quantum groups. The main motivation is the study of the duals of compact Lie
groups and their $q$-deformations: as a matter of fact these discrete quantum groups are
all amenable \cite[cor.~6.2]{Banica:CQG_subfactors}.

\begin{definition}
  Let $L$ be a central length on the Hopf \Cst algebra $(S,\delta)$ of a discrete quantum
  group. Denote by $(p_n)$ the corresponding sequence of spectral projections. We say that
  $(S,\delta,L)$ has polynomial growth if
  \begin{displaymath}
    \exists~ P \in \RR[X] ~~ \forall n\in\NN ~~~ h_R(p_n) \leq P(n) \text{.}
  \end{displaymath}
\end{definition}

\begin{rk}{Remark}
  Let $l$ be the length function on $\Irr\Cat$ associated to $L$, and recall
  that we denote by $m_\alpha$ the quantum dimension of $\alpha \in \Irr\Cat$.
  From the expression of $h_R$ given in Section~\ref{sec:notations} we see that
  \begin{displaymath}
    h_R(p_n) = \sum \{m_\alpha^2 ~|~ l(\alpha)\in [n,n+1[\} \in [0, +\infty] \text{.}
  \end{displaymath}
  In particular this implies that $h_R(p_n) = h_L(p_n)$. Moreover, put 
  \begin{eqnarray*}
    && s_n := \mathrm{Card} \{\alpha\in\Irr\Cat ~|~ l(\alpha)\in [n,n+1[\} \text{~~~and} \\
    && M_n := \sup \{m_\alpha ~|~ l(\alpha)\in [n,n+1[\} \text{.}
  \end{eqnarray*}
  The previous expression shows that $(S,\delta,L)$ has polynomial
  growth \iff the sequences $(s_n)$ and $(M_n)$ have polynomial
  growth. In the case of a discrete group, $M_n = 1$ for each $n$ and
  one recovers the classical notion of polynomial growth with respect
  to $l$.
\end{rk}

\begin{lemma} \label{lem:growth_non_unimod}
  Let $L$ be a central length on the Hopf \Cst algebra $(S,\delta)$ of a non-unimodular
  discrete quantum group. Then $(S,\delta,L)$ does not have polynomial growth. Moreover if $L$
  is a word length the sequences $(||p_nF||)_n$ and $(||p_nF^{-1}||)_n$ grow geometrically.
\end{lemma}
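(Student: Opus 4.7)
My plan attacks the two assertions separately; both rest on the multiplicativity $\delta(F)=F\tens F$ combined with the subadditivity of $L$.

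For the first assertion I would extract a spectrally non-trivial direction from $F\neq 1$: some $\beta_0\in\Irr\Cat$ satisfies $F|_{\beta_0}\neq 1$, and the identity $\Tr(Fp_{\beta_0})=\Tr(F^{-1}p_{\beta_0})$ forces $q:=||F|_{\beta_0}||$ to exceed $1$ (were all eigenvalues equal to $1$, $F|_{\beta_0}$ itself would be the identity). Iterating via $F|_{\beta_0^{\tensexp n}}=F|_{\beta_0}^{\tensexp n}$ yields $||F|_{\beta_0^{\tensexp n}}||=q^n$, so some irreducible component $\alpha_n\subset\beta_0^{\tens n}$ satisfies $||F|_{\alpha_n}||=q^n$, whence $m_{\alpha_n}=\Tr F|_{\alpha_n}\geq q^n$. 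The triangle inequality of Lemma~\ref{lem:triangles}.\ref{item:lem_tri_it}, iterated, bounds $l(\alpha_n)\leq Cn$ for some $C>0$, so $\sum_{k\leq Cn}h_R(p_k)\geq m_{\alpha_n}^2\geq q^{2n}$; this exponential lower bound is incompatible with any polynomial majoration $h_R(p_k)\leq P(k)$.

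For the second assertion, set $M_n:=||p_nF||$. First I would locate $\gamma_0\in\Dir$ with $||F|_{\gamma_0}||>1$: if $F$ were trivial on every generator, multiplicativity of $F$ under tensor products would force $F|_\beta=1$ for every irreducible $\beta$, contradicting non-unimodularity. The technical core is a ratchet inequality obtained by choosing $\alpha$ of length $n$ with $||F|_\alpha||=M_n$ and decomposing $\alpha\tens\gamma_0$: for word length, subadditivity and Frobenius reciprocity ($\beta'\subset\alpha\tens\gamma_0\Longrightarrow\alpha\subset\beta'\tens\bar\gamma_0$) restrict the components to lengths in $\{n-1,n,n+1\}$, while the norm identity $||F|_{\alpha\tens\gamma_0}||=qM_n$ forces
\begin{displaymath}
\max(M_{n-1},M_{n+1})\geq qM_n
\end{displaymath}
for every $n\geq 1$ with $p_n\neq 0$.

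The last step upgrades the ratchet to pointwise geometric growth. I would first show that the argmax of $M_k$ over $\{0,\ldots,N\}$ must sit at an endpoint: an interior $n^*\in(0,N)$ would satisfy $M_{n^*\pm 1}\leq M_{n^*}$ by maximality, contradicting the ratchet; since $M_1\geq q>1=M_0$ rules out $n^*=0$ whenever $N\geq 1$, the sequence $(M_n)$ is non-decreasing and its support is all of $\NN$. Then $M_{n-1}\leq M_n<qM_n$ kills the first branch of the ratchet, so $M_{n+1}\geq qM_n$, and inductively $M_n\geq q^n$ for every $n\geq 1$. The corresponding bound for $||p_nF^{-1}||$ follows from $\Dir=\bar\Dir$ together with $||F|_{\bar\alpha}||=||F|_\alpha^{-1}||$, a consequence of the normalizations $j_{\bar\alpha}j_\alpha=\pm 1$ and $F|_\alpha=j_\alpha^*j_\alpha$. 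The main obstacle I anticipate is precisely this upgrade: the ratchet alone only yields $\limsup M_n^{1/n}\geq q$, and one must combine it with the monotonicity (itself extracted from the same ratchet plus $M_1>M_0$) to exclude oscillating patterns and obtain the pointwise bound $M_n\geq q^n$.
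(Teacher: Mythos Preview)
Your argument is correct, but the paper proceeds differently and more directly for the word-length assertion. Instead of your one-step ratchet $\max(M_{n-1},M_{n+1})\geq qM_n$ followed by the monotonicity upgrade, the paper takes $\lambda=\max_{\gamma\in\Dir}||p_\gamma F||$ and combines two observations: every irreducible $\beta$ with $l(\beta)=k$ embeds in a $k$-fold tensor product of generators, giving the uniform upper bound $||p_\beta F||\leq\lambda^{k}$; and $\alpha^{\tens n}$ (for $\alpha\in\Dir$ realizing $\lambda$) contains some irreducible $\beta$ with $||p_\beta F||=\lambda^n$. The upper bound then forces $l(\beta)=n$ exactly, yielding $||p_nF||=\lambda^n$ on the nose, with no induction and no monotonicity argument needed --- this is precisely the device that dissolves the obstacle you anticipate. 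For the first assertion the paper goes the opposite way: it reduces to the word-length case by passing to the sub-quantum-group generated by a single $\alpha$ with $p_\alpha F\neq p_\alpha$ and invoking Lemma~\ref{lem:domin}, whereas your direct argument (exponential growth of $m_{\alpha_n}$ against the linear bound $l(\alpha_n)\leq n\,l(\beta_0)$, which is just subadditivity of $l$ rather than Lemma~\ref{lem:triangles}.\ref{item:lem_tri_it}) is self-contained and avoids that reduction. In summary, both routes succeed; the paper's extra idea is the uniform upper bound $||p_\beta F||\leq\lambda^{l(\beta)}$, which buys an exact value rather than a lower estimate and shortcuts your inductive machinery.
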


\begin{proof}
  Let us first notice that $L$ can be assumed to be a word length, even for the first
  assertion.  As a matter of fact, the non-unimodularity implies the existence of an
  irreducible representation $\alpha\in\Irr\Cat$ such that $p_\alpha F\neq p_\alpha$. By
  restricting to the ``subgroup generated by $\alpha$'', see \cite[section~2]{Vergnioux:free},
  one can assume that $\Dir = \{\alpha, \bar\alpha\}$ generates $\Cat$. By
  Lemma~\ref{lem:domin} one can moreover assume that $L$ is the word length associated to
  $\Dir$: as a matter of fact if $\epsilon L' \leq L$ with $L'$ having polynomial growth, then
  $L$ has polynomial growth.
  
  Now let $\Dir$ be the generating subset defining the word length $L$. The equality $\Tr
  p_\alpha F = \Tr p_\alpha F^{-1}$ shows that the greatest eigenvalue of $p_\alpha F$ is
  greater than or equal to $1$, with equality \iff $p_\alpha F = p_\alpha$. Let $\alpha$ be
  the element of $\Dir$ such that $p_\alpha F$ has the greatest eigenvalue $\lambda$.
  Because $\delta(F) = F\tens F$, the $n$-th power $\lambda^n$ is an eigenvalue of
  \begin{displaymath}
    p_\alpha^{\tensexp n}\delta^{n-1}(F) = \ts\sum\ds \{ p_\alpha^{\tensexp
      n}\delta^{n-1}(p_\beta F) ~|~ \beta \subset \alpha^{\tensexp n}\} \text{.}
  \end{displaymath}
  Since the maps $p_\alpha^{\tensexp n}\delta^{n-1}(p_\beta \,\cdot\,)$ are injective
  $*$-homomorphisms which have pairwise orthogonal ranges when $\beta$ varies, we conclude that
  one of the $p_\beta F$ admits $\lambda^n$ as an eigenvalue. In the same way one sees that the
  eigenvalues of $p_\beta F$, with $\beta \subset \alpha^{\tensexp n-1}$, are less than
  $\lambda^{n-1}$. As a result $||p_n F|| = \lambda^n$ and $\lambda > 1$ by non-unimodularity.
  In particular $(S,\delta,L)$ does not have polynomial growth. One proceeds in the same way
  for $||p_n F^{-1}||$, using a minimal eigenvalue of the $p_\alpha F$, $\alpha\in\Dir$.
\end{proof}

\pagebreak[3]
\begin{proposition} \label{prp:growth}
  Let $L$ be a central length on the Hopf \Cst algebra $(S,\delta)$ of a discrete quantum
  group.
  \begin{enumerate}
  \item \label{item:prp_growth_nec} If $(S,\delta)$ is amenable and $(S,\delta,L)$ has
    Property RD, then $(S,\delta,L)$ has polynomial growth.
  \item \label{item:prp_growth_suf} If $(S,\delta,L)$ has polynomial growth, then
    $(S,\delta,L)$ has Property RD.
  \end{enumerate}
\end{proposition}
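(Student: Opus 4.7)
The plan is to prove the two implications separately.

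For (\ref{item:prp_growth_suf}), given $a\in p_n\Ss$, I will verify characterization~\ref{item:df_poly} of Proposition-Definition~\ref{df:RD}. First, polynomial growth forces unimodularity by Lemma~\ref{lem:growth_non_unimod}, so $h_R$ is a trace. Writing $\Ff(a)=(\id\tens\omega_a)(V^*)$ with $\omega_a = h_R(\,\cdot\,a)\in S^*$, the unitarity of $V$ gives $\|\Ff(a)\|\leq \|\omega_a\|$. The polar decomposition $a=u|a|$ combined with the H\"older/trace inequality $|h_R(b|a|)|\leq\|b\|\,h_R(|a|)$ then yields $\|\omega_a\|\leq h_R(|a|)$. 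Since $|a|\in p_n\Ss$, Cauchy--Schwarz for $h_R$ gives $h_R(|a|)=h_R(p_n|a|)\leq h_R(p_n)^{1/2}\|a\|_2\leq P(n)^{1/2}\|a\|_2$, establishing condition~\ref{item:df_poly}.

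For (\ref{item:prp_growth_nec}), the input from amenability is a continuous character $\hat\varepsilon\colon\hat S\to\CC$. Applying the counit axiom $(\hat\varepsilon\tens\id)\hat\delta=\id$ together with the standard multiplicative-unitary identity $(\hat\varepsilon\tens\id)(V)=1_S$, one computes for all $a\in\Ss$
\[
\hat\varepsilon(\Ff(a)) = (\hat\varepsilon\tens h_R)(V^*(1\tens a)) = h_R\bigl((\hat\varepsilon\tens\id)(V^*)\cdot a\bigr) = h_R(a).
\]
Since $p_n$ need not lie in $\Ss$, I consider arbitrary finite sub-projections $q=\sum_{\alpha\in A}p_\alpha\in\Ss$ with $l(\alpha)\in[n,n+1)$ for $\alpha\in A$. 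Centrality of $L$ and the bound $Lq\leq (n+1)q$ give $\|q\|_{2,s}\leq (n+2)^s h_R(q)^{1/2}$. Combining with Property~RD:
\[
h_R(q) = |\hat\varepsilon(\Ff(q))| \leq \|\Ff(q)\| \leq C\|q\|_{2,s} \leq C(n+2)^s h_R(q)^{1/2},
\]
so $h_R(q)\leq C^2(n+2)^{2s}$. Passing to the supremum over finite sub-projections of $p_n$ yields the polynomial bound $h_R(p_n)\leq C^2(n+2)^{2s}$.

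The main delicate point is the $L^1$-type bound $\|\Ff(a)\|\leq h_R(|a|)$ used in (\ref{item:prp_growth_suf}), which genuinely requires traciality of $h_R$; this is why invoking Lemma~\ref{lem:growth_non_unimod} at the outset is essential. The rest is bookkeeping with the spectral projections of $L$, the characterizations of Property~RD established in Proposition-Definition~\ref{df:RD}, and the Fourier isometry $\Lambda(a)=\Ff(a)e$.
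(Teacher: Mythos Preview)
Your proof is correct and follows the same overall strategy as the paper: for part~\ref{item:prp_growth_nec}, both arguments establish $\hat\varepsilon\circ\Ff = h_R$ from $(\hat\varepsilon\tens\id)(V)=1$ and then apply Property~RD to the projections $p_n$; for part~\ref{item:prp_growth_suf}, both bound $\|\Ff(a)\|$ by the functional norm $\|h_R(\,\cdot\,a)\|$ and then use Cauchy--Schwarz against $p_n$.

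Your version differs from the paper's only in two minor technical choices, both of which are mild improvements. In part~\ref{item:prp_growth_suf} you invoke unimodularity at the outset and use the polar decomposition $a=u|a|$ to get $\|\omega_a\|\leq h_R(|a|)$ in one stroke; the paper instead first treats the case $a\geq 0$ (where $\|\omega_a\|=h_R(a)$ directly) and afterwards reduces general $a$ to this case via the Cartesian decomposition $a=\sum_{k=0}^3 i^k a_k$ into four positive parts, only then appealing to traciality to control $\sum\|a_k\|_2$ by $\|a\|_2$. Both routes genuinely need $h_R$ to be a trace, so nothing is lost. In part~\ref{item:prp_growth_nec} you are more careful than the paper about the possibility that $p_n\notin\Ss$, passing through finite central sub-projections $q\leq p_n$ and taking a supremum; the paper simply applies the inequality to $p_n$ without comment.
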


\begin{proof}
  \ref{item:prp_growth_nec}. By hypothesis there exists a continuous co-unit
  $\hat\varepsilon$ on $(\hat S,\hat\delta)$. Let us show that $x :=
  (\hat\varepsilon\tens\id) (V)$ equals the identity: one has
  \begin{displaymath}
    x^2 = (\hat\varepsilon\tens\hat\varepsilon\tens\id) (V_{13}V_{23})
    = (\hat\varepsilon\tens\hat\varepsilon\tens\id) (\hat\delta\tens\id) (V)
    = (\hat\varepsilon\tens\id) (V) = x \text{,}
  \end{displaymath}
  but on the other hand $x$ is unitary because $\hat\varepsilon$ is a $*$-character.
  Hence $(\hat\varepsilon\tens\id) (V) = \id$ and, by definition of $\Ff$,
  $\hat\varepsilon \rond \Ff = h_R$. As a result we can write, like in the classical case:
  \begin{displaymath}
    \forall a\in\Ss ~~~ |h_R(a)| = |\hat\varepsilon\Ff(a)| 
    \leq ||\Ff(a)|| \leq C ||a||_{2,s} \text{,}
  \end{displaymath}
  for the constants $C$, $s\in\RR_+$ given by Property RD. Applying this to the
  projections $p_n$ gives the desired result:
  \begin{displaymath}
    h_R(p_n) \leq C ||(1+L)^s p_n||_2 \leq C(2+n)^s \sqrt{h_R(p_n)},
  \end{displaymath}
  hence $h_R(p_n) \leq C^2 (2+n)^{2s}$ for all $n\in\NN$.
  
  \ref{item:prp_growth_suf}. Let $a\in p_n\Ss$, in particular we have $a = pa$ for some central
  projection $p\in S$. We consider the linear functional $ah_R := h_R(\,\cdot\, a)$
  defined on the \Cst algebra $S$ and we first assume that it is positive. One can then
  write
  \begin{displaymath}
    ||\Ff(a)|| = ||(\id\tens ah_R)(V^*)|| \leq ||ah_R|| \text{.}
  \end{displaymath}
  Let $(u_i)$ be an approximate unit of $S$, we have $ah_R(u_i) = ah_R(u_ip)$ and, by
  taking limits, $||ah_R|| = ah_R(p) = h_R(a)$. As a result
  \begin{displaymath}
    ||\Ff(a)|| \leq h_R(a) = h_R(p_na) \leq \left( h_R(p_n) h_R(a^*a) \right)^{1/2} \leq
    \sqrt{P(n)}~ ||a||_2 \text{.} 
  \end{displaymath}
  
  The general case follows by decomposing $a\in p_n\Ss$ into a linear
  combination $\sum_{k=0}^3 i^k a_k$ of $4$ positive elements: the negative and
  positive parts of $\Im a$ and $\Re a$.  Since $(S,\delta)$ is necessarily
  unimodular according to Lemma~\ref{lem:growth_non_unimod}, $h_R$ is a trace
  and one can check that $||a||_2 = (\sum ||a_k||_2^2)^{1/2}$, which is greater
  than $(\sum ||a_k||)/2$.
\end{proof}

\begin{rk}{Example}
  Let $G$ be a compact group and take $S = C^*(G)$, $\delta(U_g) = U_g\tens U_g$. Then
  $(S,\delta)$ is the Hopf \Cst algebra of a discrete quantum group which is called the
  dual of $G$. By definition, $\Cat$ identifies in this case with the category of the
  finite-dimensional unitary representations of $G$. Besides, $\hat S$ identifies with
  $C(G)$ and $\Ff$ coincides with the Fourier transform of \cite[{\S}8,~n{\degree}1]
  {Bourbaki:lie_compact}. The finitely generated case corresponds to the case of compact
  Lie groups, which we address now.
  
  Let $G$ be a connected compact Lie group and choose a maximal torus $T\subset G$.
  Following~\cite{Bourbaki:lie_compact}, we denote by $X$ the dual group of $T$, by $R
  \subset X$ the set of roots of $G$, by $W$ the Weyl group of $(G,T)$ acting on $X$ and
  by $w_0$ the longest element of $W$.  We moreover choose a subset of positive roots $R_+
  \subset R$ and denote by $X_{++}$ the associated set of dominant weights. Taking the
  highest weight of irreducible representations defines a canonical identification between
  $\Irr\Cat$ and $X_{++}$.
  
  Let $||\,\cdot\,||$ be a $W$-invariant definite-positive quadratic form on
  $X\tens_\ZZ\RR$. Its restriction $l$ to $\Irr\Cat \simeq X_{++}$ is in fact a length
  function: in our identification, we have $\alpha \subset \beta\tens\beta'$
  $\Longrightarrow$ $\alpha \leq \beta + \beta'$, and because such inequalities are
  conserved by scalar product against dominant weights we obtain
  \begin{displaymath}
    ||\alpha||^2 \leq (\alpha | \beta + \beta') \leq 
    ||\beta + \beta'||^2 \leq (||\beta|| + ||\beta'||)^2 \text{.}
  \end{displaymath}
  Moreover, $||\bar\alpha|| = ||-w_0(\alpha)|| = ||\alpha||$ and $||1_\Cat|| = ||0_X|| =
  0$. We denote by $L$ the length function on $(S,\delta)$ associated to $l$.
  
  It is clear by definition of $L$ that $(s_n)$ has polynomial growth. Moreover $M_n$,
  which is the maximal dimension of the representations of length $n$, is also polynomialy
  growing by the dimension formula of H.~Weyl:
  \begin{displaymath}
    \dim \beta = \ts\prod_{\alpha\in R_+}\ds \frac {\langle \beta+\rho, K_\alpha\rangle}
    {\langle \rho, \alpha\rangle} \text{,}
  \end{displaymath}
  where $2\rho = \sum_{\alpha\in R_+} \alpha$ and the $\langle \,\cdot\,, K_\alpha
  \rangle$ are linear forms on $X$. As a result, duals of connected compact Lie groups
  have Property RD. In fact with little more work \cite[{\S}8,
  thm.~1]{Bourbaki:lie_compact} one can see that $\hat H_L^\infty$ coincides in this case
  with $C^\infty(G)$, which is evidently included in $C(G)$.
  
  On the other hand the $q$-deformations of simple compact Lie groups are usually
  non-unimodular and hence their duals do not have Property RD by
  Lemma~\ref{lem:growth_non_unimod} and Proposition~\ref{prp:growth}. This is for instance the
  case of the duals of the compact quantum groups $SU_q(N)$ for $q\in ]0,1[$, $N\geq 3$.  For
  $N=2$, the quantum group $SU_q(2)$ is defined for $q\in [-1,1] \setminus {0}$ and is
  non-unimodular for $|q|<1$. For $q = -1$ it has the same semi-ring of representations and the
  same dimension map as $SU(2)$, hence it has Property RD: this is a first non-commutative,
  non-cocommutative example.
\end{rk}

\subsection{The non-unimodular case}
\label{sec:non-unimod}

We have remarked in the previous section that Property RD is not conserved by non-unimodular
deformations. In this section we will see that Property RD is in fact uncompatible with
non-unimodularity. This will result from the geometric necessary condition
(\ref{eq:df_geom}) for Property RD which we will also use in the next section.

To move smoothly to the geometric point of view, let us use the notion of convolution of
elements of $S$ \cite{Woro:lorenz}. For $a$, $b\in\Ss$, we denote by $\Cc(a\tens b)$ the unique
element $c\in\Ss$ satisfying the relation $(ah_R \tens bh_R)\rond\delta = ch_R$, where $xh_R :=
h_R(\,\cdot\, x)$. This clearly defines a linear map $\Cc : \Ss\tens_{\mathrm{alg}}\Ss \to
\Ss$, which is related to the Fourier transform in the following way:
\begin{eqnarray*}
  \Ff(a)\Ff(b) &=& (\id\tens bh_R\tens ah_R)(V_{13}^* V_{12}^*) \\
  &=& (\id\tens bh_R\tens ah_R)(\id\tens\delta)(V^*)
  = \Ff(\Cc(b\tens a)) \text{.}
\end{eqnarray*}
In particular this yields the equality $\Ff(a)\Lambda(b) = \Lambda(\Cc(b\tens a))$.  Moreover it
is easy to check from the definition that
\begin{displaymath}
  \Cc(\delta(s)~ a\tens b) = s~ \Cc(a\tens b) ~~~\text{and}~~~
  \Cc(a\tens b~ \delta(s)) = \Cc(a\tens b)~ s \text{,}
\end{displaymath}
using for the second equality the KMS property of $h_R$. In particular we have $p_\gamma
\Cc(a\tens b) = \Cc(\delta(p_\gamma)(a\tens b)\delta(p_\gamma))$ and $p_\gamma\Cc(x\tens y)$ is
a multiple of $p_\gamma$ for any $x$, $y\in Z(\Ss)$, because it is then central.

\begin{lemma} \label{lem:norm_convol}
  Let $\gamma\subset \beta\tens\alpha$ be an inclusion without multiplicity, with
  $\alpha$, $\beta$, $\gamma \in \Irr\Cat$. Then we have
  \begin{displaymath}
    \forall x\in p_\beta S \tens p_\alpha S ~~~
    ||p_\gamma\Cc(x)||_2 = \sqrt{\frac{m_\beta m_\alpha}{m_\gamma}}~
    ||\delta(p_\gamma) x \delta(p_\gamma)||_2 \text{.}
  \end{displaymath}
\end{lemma}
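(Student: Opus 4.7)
The plan is to use the multiplicity-one hypothesis to produce an isometric intertwiner and reduce to a computation on a single matrix block. First I set $q := (p_\beta\tens p_\alpha)\delta(p_\gamma) = \delta(p_\gamma)(p_\beta\tens p_\alpha) \in p_\beta S\tens p_\alpha S$; the bimodularity of $\Cc$ recalled before the lemma gives $p_\gamma\Cc(x) = \Cc(\delta(p_\gamma)x\delta(p_\gamma)) = \Cc(qxq)$, so I may replace $x$ by $y := qxq$. Multiplicity one makes $q$ a rank-$\dim H_\gamma$ projection, and picking an isometric intertwiner $v : H_\gamma \to H_\beta\tens H_\alpha$ with $vv^* = q$, every such $y$ is of the form $vTv^*$ for a unique $T\in L(H_\gamma) = p_\gamma S$. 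The intertwining $\delta(s)v = vs$ for $s\in p_\gamma S$ extends to the affiliated element $F$, in the form $\delta(F)v = vF$, by applying the intertwining to the bounded operator $Fp_\gamma$ together with $\delta(F) = F\tens F$ and $\delta(p_\gamma)v = v$. This last identity is the key to handling $F$.

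Using $(h_R\tens h_R)(\cdot) = m_\beta m_\alpha\,\Tr(\delta(F)\cdot)$ on $p_\beta S\tens p_\alpha S$ and $\|T\|_2^2 = m_\gamma\,\Tr(FT^*T)$ on $p_\gamma S$, the first norm is computed as
\[\|vTv^*\|_2^2 = m_\beta m_\alpha\,\Tr(\delta(F)vT^*Tv^*) = m_\beta m_\alpha\,\Tr(vFT^*Tv^*) = m_\beta m_\alpha\,\Tr(FT^*T) = \tfrac{m_\beta m_\alpha}{m_\gamma}\|T\|_2^2,\]
using $\delta(F)v = vF$ and $v^*v = 1_{H_\gamma}$. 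For the second norm, the bimodularity of $\Cc$ combined with $\delta(s)v = vs$ and its adjoint $v^*\delta(t) = tv^*$ gives $\Cc(vsTtv^*) = \Cc(\delta(s)vTv^*\delta(t)) = s\,\Cc(vTv^*)\,t$, so $\tilde\phi : T\mapsto p_\gamma\Cc(vTv^*)$ is a two-sided $L(H_\gamma)$-bimodule map from $L(H_\gamma)$ to $p_\gamma S = L(H_\gamma)$; by a Schur-type argument, $\tilde\phi(T) = cT$ for some $c\in\CC$.

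The scalar $c$ is identified by evaluating at $T = p_\gamma$: $c\,p_\gamma = p_\gamma\Cc(q) = p_\gamma\Cc(p_\beta\tens p_\alpha)$, which is a scalar multiple of $p_\gamma$ since $p_\beta, p_\alpha$ are central. Applying $h_R$ and the defining relation of $\Cc$,
\[h_R(p_\gamma\Cc(p_\beta\tens p_\alpha)) = (h_R\tens h_R)(q) = m_\beta m_\alpha\,\Tr(\delta(F)q) = m_\beta m_\alpha\,\Tr(vFv^*) = m_\beta m_\alpha\,m_\gamma,\]
combined with $h_R(p_\gamma) = m_\gamma^2$, yields $c = m_\beta m_\alpha/m_\gamma$. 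Putting everything together,
\[\|p_\gamma\Cc(x)\|_2 = c\,\|T\|_2 = \sqrt{\tfrac{m_\beta m_\alpha}{m_\gamma}}\,\|vTv^*\|_2 = \sqrt{\tfrac{m_\beta m_\alpha}{m_\gamma}}\,\|\delta(p_\gamma)x\delta(p_\gamma)\|_2,\]
which is the claim. The only mildly delicate point is handling the unbounded multiplier $F$; this is bypassed by working systematically with the bounded element $Fp_\gamma$ and the intertwining $\delta(F)v = vF$, which reduce every trace involving $F$ to $m_\gamma = \Tr(Fp_\gamma)$.
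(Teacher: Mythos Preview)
Your proof is correct and follows essentially the same route as the paper's. The paper writes the multiplicity-one decomposition as $\delta(p_\gamma)x\delta(p_\gamma) = \delta(y)(p_\beta\tens p_\alpha)$ for a unique $y\in p_\gamma S$, which is exactly your $vTv^*$ with $T=y$; it then computes $\|p_\gamma\Cc(x)\|_2^2 = \bar\lambda\,\|\delta(p_\gamma)x\delta(p_\gamma)\|_2^2$ in one chain via the defining relation of $\Cc$, whereas you separate this into the two ratios $\|vTv^*\|_2/\|T\|_2$ and $\|p_\gamma\Cc(vTv^*)\|_2/\|T\|_2$, but the determination of the scalar $c=\lambda=m_\beta m_\alpha/m_\gamma$ is identical in both arguments.
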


\begin{proof}
  Let $\lambda\in\CC$ be the scalar such that $p_\gamma \Cc(p_\beta \tens p_\alpha) = \lambda
  p_\gamma$.  By the hypothesis on the inclusion $\gamma \subset \beta\tens\alpha$, for any
  $x\in p_\beta S \tens p_\alpha S$ there exists $y\in p_\gamma S$ such that
  $\delta(p_\gamma)x\delta(p_\gamma) = \delta(y) (p_\beta \tens p_\alpha)$. We then have
  $p_\gamma \Cc(x) = \Cc (\delta(p_\gamma)x\delta(p_\gamma)) = y p_\gamma \Cc(p_\beta \tens
  p_\alpha) = \lambda y$. As a result
  \begin{eqnarray*}
    ||p_\gamma \Cc(x)||_2^2 &=& \bar\lambda~ h_R(y^*p_\gamma \Cc(x))
    = \bar\lambda~ (h_R\tens h_R) (\delta(y^*)\delta(p_\gamma) x) \\
    &=& \bar\lambda~ (h_R\tens h_R) (\delta(p_\gamma) x^*\delta(p_\gamma) x)
    = \bar\lambda~ ||\delta(p_\gamma) x\delta(p_\gamma)||_2^2 \text{.}
  \end{eqnarray*}
  We obtain the desired value of $\lambda$ by the following computation:
  \begin{eqnarray*}
    \lambda\, m_\gamma^2 &=& \lambda\, h_R(p_\gamma) = h_R(p_\gamma \Cc(p_\beta\tens p_\alpha)) 
    = (h_R\tens h_R) (\delta(p_\gamma) (p_\beta\tens p_\alpha)) \\
    &=& m_\beta m_\alpha (\mathrm{Tr}\tens\mathrm{Tr}) ((F\tens F) \delta(p_\gamma)) 
    = m_\beta m_\alpha m_\gamma \text{.} 
  \end{eqnarray*} \par
\end{proof}

Let us now fix a central length $L$ on $(S,\delta)$ and denote by $l$ the
associated length function on $\Irr \Cat$. In view of the link between
convolution and Fourier transform, we have for any $a\in p_\alpha S$, $b\in
p_\beta S$:
\begin{equation*}
  ||p_\gamma \Ff(a) p_\beta \Lambda(b)|| = ||p_\gamma \Cc(b\tens a)||_2.
\end{equation*}
As a result Lemma~\ref{lem:norm_convol} gives a necessary condition for
caracterization~\ref{item:df_bloc} of Property RD to be fulfilled: there should
exist a polynomial $P \in \RR[X]$ such that one has, for any
inclusion $\gamma\subset \beta\tens\alpha$ without multiplicity and $a\in
p_\alpha S$, $b\in p_\beta S$:
\begin{equation} \label{eq:df_geom}
  ||\delta(p_\gamma) (b\tens a)\delta(p_\gamma)||_2 \leq 
  \sqrt{\frac{m_\gamma}{m_\beta m_\alpha}}~ P(\lfloor l(\alpha)\rfloor)~ 
  || b\tens a||_2.
\end{equation}
\pagebreak[3]

Let us observe that this last condition is of geometric nature: it concerns the
relative position in $H_\beta \tens H_\alpha$ of the cone of decomposable
tensors and of the $\gamma$-homogeneous subspace. To emphasize this point of
view we will now work in the identifications $p_\alpha S \simeq L(H_\alpha)$ and
use the twisted Hilbert-Schmidt norms $||x||_{HS}^2 = \Tr (F x^*x)$ on
$L(H_\alpha)$, which only differs from the $2$-norm on $p_\alpha S$ by a
coefficient $m_\alpha$. In particular, Inequality~(\ref{eq:df_geom}) can
equivalently be expressed with the twisted Hilbert-Schmidt norms.

Inequality~(\ref{eq:df_geom}) must in particular be satisfied for $\gamma =
1_\Cat$. In this case we have $\beta = \bar\alpha$, the inclusion is
automatically multiplicity free and it is realized by $t_{\bar\alpha}$, up to a
scalar. Since $||t_{\bar\alpha}|| = \sqrt{m_\alpha}$, we have
$||\delta(p_\gamma) (b\tens a)\delta(p_\gamma)||_{HS} = |t_{\bar\alpha}^*
(b\tens a) t_{\bar\alpha}| / m_\alpha$ and our necessary condition now reads:
$\forall \alpha\in\Irr\Cat$, $a\in L(H_\alpha)$, $b\in L(H_{\bar\alpha})$
\begin{equation} \label{eq:df_necess}
  |t_{\bar\alpha}^* (b\tens a) t_{\bar\alpha}| \leq 
  P(\lfloor l(\alpha)\rfloor) || b\tens a||_{HS} \text{.}
\end{equation}
The left-hand side has an even simpler expression, which comes from the definition of the
morphisms $t_{\bar\alpha}$ and holds in fact even if $\alpha$ is not irreducible. Let
$(e_i)$ be an orthonormal basis of $H_{\bar\alpha}$ and put $\bar a = j_{\bar\alpha}^* a
j_{\bar\alpha}$ for $a\in L(H_\alpha)$, for any such $a$ and $b\in L(H_{\bar\alpha})$ we
have
\begin{equation} \label{eq:dem_compr}
  t_{\bar\alpha}^* (b\tens a) t_{\bar\alpha} =   
  \ts\sum\ds (e_i | b e_j) (\bar e_i | a \bar e_j) = \Tr \bar a^* b \text{.}
\end{equation}

\begin{proposition} \label{prp:unimod_necess}
  Let $L$ be a central length on the Hopf \Cst algebra $(S,\delta)$ of a discrete quantum
  group. If $(S,\delta,L)$ has Property RD, then $(S,\delta)$ is unimodular.
\end{proposition}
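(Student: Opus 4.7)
The plan is to derive a contradiction by upgrading the necessary condition (\ref{eq:df_necess}) into the pointwise bound $||F_\alpha^{-1}|| \leq P(\lfloor l(\alpha)\rfloor)$, and then violating this bound along tensor powers of an irreducible $\beta$ with $F_\beta \neq 1$.

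First I extract the pointwise bound. Fix $\alpha\in\Irr\Cat$ and choose an orthonormal basis $(e_i)$ of $H_\alpha$ diagonalising $F_\alpha$ with eigenvalues $\lambda_i$. Since $j_\alpha^*j_\alpha = F_\alpha$, the vectors $\bar e_i = j_\alpha(e_i)$ are pairwise orthogonal with $||\bar e_i||^2 = \lambda_i$, so $f_i := \bar e_i/\sqrt{\lambda_i}$ is an orthonormal basis of $H_{\bar\alpha}$ in which $F_{\bar\alpha}f_i = \lambda_i^{-1}f_i$. Choose $k$ with $\lambda_k$ minimal and set $a = |e_k\rangle\langle e_k|$, $b = |f_k\rangle\langle f_k|$. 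A direct computation from $j_{\bar\alpha}j_\alpha = \pm 1$ yields $\bar a = \lambda_k^{-1}|f_k\rangle\langle f_k|$, so (\ref{eq:dem_compr}) gives $\Tr \bar a^*b = \lambda_k^{-1}$, while $||b\tens a||_{HS}^2 = \Tr(F_{\bar\alpha}b)\cdot\Tr(F_\alpha a) = \lambda_k^{-1}\cdot\lambda_k = 1$. Inserting these into (\ref{eq:df_necess}) produces $||F_\alpha^{-1}|| = \lambda_k^{-1} \leq P(\lfloor l(\alpha)\rfloor)$, valid for every $\alpha\in\Irr\Cat$.

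Now suppose for contradiction that $(S,\delta)$ is not unimodular and fix $\beta\in\Irr\Cat$ with $F_\beta\neq 1$. The equality $\Tr F_\beta = \Tr F_\beta^{-1}$ forces the smallest eigenvalue $\mu$ of $F_\beta$ to satisfy $\mu<1$. Since $\delta(F)=F\tens F$, the action of $F$ on $H_\beta^{\tens n}$ is $F_\beta^{\tens n}$, whose smallest eigenvalue is $\mu^n$. Decomposing $\beta^{\tens n}$ into irreducibles and using that $F$ acts on each $\alpha$-isotypic summand of $H_\beta^{\tens n}$ as a multiple of $F_\alpha$, the spectrum of $F_\beta^{\tens n}$ is the union of the spectra of $F_\alpha$ over $\alpha\subset\beta^{\tens n}$. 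Hence $\mu^n$ is attained in some irreducible summand $\alpha_n\subset\beta^{\tens n}$, giving $||F_{\alpha_n}^{-1}|| \geq \mu^{-n}$; iterating sub-additivity of the length function along this decomposition also yields $l(\alpha_n)\leq n\,l(\beta)$.

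Combining both bounds gives $\mu^{-n} \leq P(n\,l(\beta))$ for all $n$, impossible for $n$ large since the left-hand side grows geometrically and the right-hand side only polynomially. The only subtle point is the first step, where the two orthonormal bases of $H_\alpha$ and $H_{\bar\alpha}$ adapted to $F_\alpha$ and to the antilinear $j_\alpha$ must be set up with the correct normalisation ($||\bar e_i||^2 = \lambda_i$, not $1$); after that everything is a one-line application of (\ref{eq:dem_compr}). Note that the argument does not require $L$ to be a word length, since the geometric lower bound is produced directly at the level of irreducible summands of $\beta^{\tens n}$, in the same spirit as Lemma~\ref{lem:growth_non_unimod}.
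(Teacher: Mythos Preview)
Your argument is correct and follows essentially the same approach as the paper: plug rank-one projections onto eigenvectors of $F$ into the necessary condition~(\ref{eq:df_necess}) via~(\ref{eq:dem_compr}) to get a polynomial bound on the spectrum of $F$, then contradict this with the geometric growth coming from $\delta(F)=F\tens F$ along tensor powers. The only difference is cosmetic: the paper bounds $||p_nF||$ and then invokes Lemma~\ref{lem:growth_non_unimod} (which first reduces to a word length on a finitely generated sub-quantum-group), whereas you bound $||F_\alpha^{-1}||$ and carry out the tensor-power growth argument directly, which is slightly more self-contained and avoids that reduction.
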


\begin{proof}
  Let $\lambda \in \RR_+$ be an eigenvalue of $p_nF$, there exists a
  corresponding unit eigenvector $\xi\in H_{\bar\alpha}$ for some $\alpha\in
  \Irr\Cat$ with $l(\alpha) \in [n,n+1[$. We take $b=p_\xi$, the orthogonal
  projection onto $\CC\xi$, and $a = \bar p_\xi$. According
  to~(\ref{eq:dem_compr}), the left-hand side of~(\ref{eq:df_necess}) equals
  then $\Tr p_\xi = 1$.  On the other hand we have
  \begin{eqnarray*}
    ||p_\xi||_{HS}^2 &=& \Tr F p_\xi = \lambda \text{~~~and} \\
    ||\bar p_\xi||_{HS}^2 &=& \Tr F j^* p_\xi jj^* p_\xi j = 
    \Tr F^{-2}p_\xi F^{-1}p_\xi = \lambda^{-3} \text{.}
  \end{eqnarray*}
  Hence the condition~(\ref{eq:df_necess}) reads in this particular case $\lambda\leq P(n)$.
  Taking the supremum over the eigenvalues $\lambda$ shows that $||p_n F|| \leq P(n)$ for all
  $n$. This is impossible for a non-unimodular discrete quantum group by
  Lemma~\ref{lem:growth_non_unimod}.
\end{proof}

\subsection{The free quantum groups}
\label{sec:free_quantum}

We will mainly study the case of the duals of the orthogonal free quantum groups
$\hat S = A_o(Q)$, with $Q\in GL(N,\CC)$. Recall that $A_o(Q)$ is the \Cst
algebra generated by $N^2$ generators $u_{ij}$ and the relations making $U =
(u_{ij})$ unitary and $Q\bar U Q^{-1}$ equal to $U$
\cite{Wang:freeprod,DaeleWang:univ}. As usual, we will assume that $\bar QQ$ is
a scalar matrix, so that the fundamental corepresentation $U$ is irreducible.
When $N=2$, the discrete quantum groups in consideration correspond in fact to
the duals of the quantum groups $SU_q(2)$ \cite[section~5]{Banic:U(n)}, which we
have already studied. Moreover the dual of $A_o(Q)$ is unimodular \iff $Q$ is a
multiple of a unitary matrix, and up to an isomorphism one can then assume that
$Q = I_N$ or $Q = \big({0\atop-I_k}{I_k\atop0}\big)$
\cite{BichonRijdtVaes:ergodic}.

It is known that $\Cat$ identifies to the representation theory of $SU(2)$
\cite{Banic:O(n)_cras}: the irreducible representations $\alpha_n = \bar\alpha_n$ are indexed
by integers $n\in\NN$ in such a way that $\alpha_0 = 1_\Cat$ and $\alpha_1\tens\alpha_n \simeq
\alpha_{n-1} \oplus \alpha_{n+1}$. In particular we have $l(\alpha_n) = n$ with respect to
$\Dir = \{\alpha_1\}$ and the sequence of quantum dimensions $(m_n)_n$ satisfies the recursive
equation $m_1m_n = m_{n-1} + m_{n+1}$.  Moreover $m_0 = 1$ and $m_1$ is strictly greater than
$2$ when $N\geq 3$, hence in this case $m_n = r^n (1 - s^{n+1})/(1 - s)$ for some $r>1$ and $s
= r^{-2}<1$ \cite[lemma~2.1]{Vergnioux:cayley}.

\bigskip

In the case of the orthogonal free quantum groups, there is only one irreducible
representation of a given length, and consequently~(\ref{eq:df_geom}) is
equivalent to Property RD. More precisely we have by Lemma~\ref{lem:norm_convol}
\begin{equation*}
  ||p_l \Ff(a) p_k \Lambda(b)|| = ||p_l \Cc(b\tens a)||_2
  = \sqrt{\frac{m_k m_n}{m_l}}~ ||\delta(p_l) x \delta(p_l)||_2\text{,}
\end{equation*}
for $a \in p_n S$ and $b\in p_k S$. As a result
caracterization~\ref{item:df_bloc} of Property RD is satisfied \iff we have, for
any integers $k$, $l$, $n$ and every $a\in p_nS$, $b\in p_kS$:
\begin{equation} \label{eq:df_ao}
  ||\delta(p_l) (b\tens a) \delta(p_l)||_2
  \leq \sqrt{\frac{m_l}{m_n m_k}} P(n)~ ||b||_2 \, ||a||_2.
\end{equation}

Let us remark that the numerical coefficient in the right-hand side tends to
zero as $n$ goes to infinity. Hence the free quantum group under consideration
has Property RD \iff the cone of decomposable tensors in $H_k\tens H_n$ is
``asymptotically far'' from the subspace equivalent to $H_l$.  Like in
Section~\ref{sec:non-unimod}, we will study this geometric condition in the
identifications $p_nS \simeq L(H_n)$ and using the twisted Hilbert-Schmidt
norms.

For any representation $\alpha\in\Cat$ we have $\bar\alpha = \alpha$, recall
that we denote by $t_\alpha : \CC = H_{1_\Cat} \to H_\alpha \tens H_\alpha$ the
morphism associated to a normalized conjugation map on $H_\alpha$. For any
Hilbert spaces $H$, $H'$ we will also call $t_\alpha$ the map $\id\tens
t_\alpha\tens\id : H\tens H' \to H\tens H_\alpha\tens H_\alpha\tens H'$.  When
$\alpha = \alpha_k^{\tensexp n}$ we use the conjugation map
$j_{\alpha_k^{\tensexp n}} = \Sigma \rond (j_\alpha\tens j_{\alpha_k^{\tensexp
    n-1}})$ and the notation $t_k^n := t_\alpha$.

\begin{lemma} \label{lem:tech_ao}
  Let $L$ be the word length induced by $\Dir = \{\alpha_1\}$ on the dual of
  some $A_o(Q)$ with $N\geq 3$. Then $(S,\delta,L)$ has Property RD \iff there
  exists $P \in \RR[X]$ such that for all $k$, $l$, $n \in \NN$, $a\in L(H_n)$,
  $b\in L(H_k)$:
  \begin{displaymath}
    ||t_1^{q*} \delta(p_l)(b\tens a)\delta(p_l) t_1^q||_{HS} 
    \leq  P(n)~ ||b||_{HS} \, ||a||_{HS} \text{,}
  \end{displaymath}
  where we put $q = (n+k-l)/2$ and the norm in the left-hand side is the twisted
  Hilbert-Schmidt norm on $L(H_{k-q}\tens H_{n-q})$.
\end{lemma}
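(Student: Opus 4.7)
The plan is to reinterpret the characterization (\ref{eq:df_ao}) of Property RD in terms of the Hilbert-Schmidt compression by $t_1^q$. The first step is to convert from the $||\cdot||_2$-norms used in (\ref{eq:df_ao}) to the twisted Hilbert-Schmidt norms: if $x\in p_\alpha S\simeq L(H_\alpha)$ then $||x||_2 = \sqrt{m_\alpha}\,||x||_{HS}$, so (\ref{eq:df_ao}) can equivalently be rewritten as
$$||\delta(p_l)(b\tens a)\delta(p_l)||_2 \leq \sqrt{m_l}\,P(n)\,||b||_{HS}\,||a||_{HS}.$$
It therefore remains to compare the left-hand side above with $||t_1^{q*}\delta(p_l)(b\tens a)\delta(p_l)t_1^q||_{HS}$.

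For this I would use that the fusion rules of $A_o(Q)$ are multiplicity-free ($\alpha_k\tens\alpha_n = \alpha_{|k-n|}\oplus\alpha_{|k-n|+2}\oplus\cdots\oplus\alpha_{k+n}$), so $\mathrm{Mor}(\alpha_l,\alpha_k\tens\alpha_n)$ is at most one-dimensional: it is nonzero precisely when $q=(n+k-l)/2$ is a non-negative integer with $q\leq\min(n,k)$. In that case a generator is provided by the composition that embeds $H_l$ as the top irreducible of $H_{k-q}\tens H_{n-q}$, inserts $\id\tens t_1^q\tens\id$ into $H_{k-q}\tens H_1^{\tensexp q}\tens H_1^{\tensexp q}\tens H_{n-q}$, and then projects onto the top components $H_k\subset H_{k-q}\tens H_1^{\tensexp q}$ and $H_n\subset H_1^{\tensexp q}\tens H_{n-q}$. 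This identifies the $\alpha_l$-isotypic subspace $H_l^{(k,n)}\subset H_k\tens H_n$, and the compression $t_1^{q*}(\cdot)t_1^q$ is, up to an explicit scalar, the adjoint of this identification. A direct computation of twisted traces in the spirit of Lemma~\ref{lem:norm_convol} then gives an equality of the form
$$||t_1^{q*}\delta(p_l)(b\tens a)\delta(p_l)t_1^q||_{HS} = C(k,n,l)\,||\delta(p_l)(b\tens a)\delta(p_l)||_2,$$
with $C(k,n,l)$ an explicit product of quantum dimensions and of $||t_1^q||^2 = m_1^q$.

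Combining these two ingredients, the equivalence reduces to showing that the ratio $\sqrt{m_l}/C(k,n,l)$ is bounded by a polynomial in $n$, independently of $k$ and $l$. This is the main obstacle. All the individual quantities involved grow geometrically by the asymptotics $m_j\sim r^j$ with $r>1$ (valid when $N\geq 3$, as recalled just before the lemma), so the polynomial bound hinges on the geometric factors cancelling exactly thanks to the arithmetic relation $l+2q=k+n$. With this cancellation in hand, absorbing the remaining bounded constants into $P$ establishes the equivalence stated in the lemma.
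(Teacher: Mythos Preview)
Your overall strategy is the paper's: since $(p_k\tens p_n)t_1^q$ restricted to the $H_l$-isotypic component of $H_{k-q}\tens H_{n-q}$ is a scalar multiple of an isometry, the inequality of the lemma and (\ref{eq:df_ao}) differ by a numerical factor, and the whole question is whether that factor is bounded above and below independently of $k,l,n$. The gap is in your identification of this factor. The scalar you call $C(k,n,l)$ is governed by $\mu^2:=||(p_k\tens p_n)t_1^q\,\delta(p_l)||^2$, and this is \emph{not} a product of quantum dimensions and $||t_1^q||^2=m_1^q$; a computation ``in the spirit of Lemma~\ref{lem:norm_convol}'' will not produce it, because Lemma~\ref{lem:norm_convol} only sees the abstract multiplicity-free inclusion, whereas $\mu$ depends on the specific intertwiner $(p_k\tens p_n)t_1^q$. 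The paper obtains $\mu$ by factoring
\[
(p_k\tens p_n)\,t_1^q \;=\; T_{k-1,n-1}\rond T_{k-2,n-2}\rond\cdots\rond T_{k-q,n-q},
\qquad T_{p,p'}:=(p_{p+1}\tens p_{p'+1})\,t_1,
\]
and quoting from \cite[prop.~2.3]{Vergnioux:cayley} the formula $||T_{p,p'}\,\delta(p_l)||^2=(m_{p+1}/m_p)\,N^l_{p,p'}$ with $N^l_{p,p'}=1-m_{p-r}m_{p'-r-1}/(m_{p+1}m_{p'})$ and $r=(p+p'-l)/2$. One gets a telescoping piece $m_k/m_{k-q}$ times a product of $q$ correction factors $N^l_{p,p'}\in(0,1)$ that are not ratios of quantum dimensions.

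Consequently your closing heuristic --- that the exponential pieces cancel via $l+2q=k+n$ --- handles only the telescoping part and does not settle the matter: for the \iff one must also show that $\prod_{j=0}^{q-1} N^l_{k-q+j,\,n-q+j}$ stays bounded away from $0$ uniformly, even though the number $q$ of factors is unbounded. This is precisely the ``boring but easy exercise'' the paper alludes to, carried out by inserting the closed form $m_p=r^p(1-s^{p+1})/(1-s)$ (with $s=r^{-2}$) into the full expression $\frac{m_k}{m_{k-q}}\sqrt{m_l/(m_nm_k)}\prod N^l_{p,p'}$ and checking that it lies between two positive constants.
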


\begin{proof}
  We have $(b\tens a)\delta(p_l) t_1^q = (b\tens a) (p_k\tens
  p_n)t_1^q\delta(p_l)$. Since $(p_k\tens p_n)t_1^q : H_{k-q}\tens H_{n-q} \to
  H_k\tens H_n$ is a morphism, it is a multiple of an isometry on the highest
  homogeneous subspace $H_l \simeq \delta(p_l) (H_{k-q}\tens H_{n-q})$.  In view
  of the characterization of Property RD given by Inequality~(\ref{eq:df_ao}),
  the proof reduces to controlling the norm of $(p_k\tens p_n)t_1^q
  \delta(p_l)$. To do so we notice that
  \begin{displaymath}
    (p_k\tens p_n)t_1^q = (p_k\tens p_n)t_1 \rond (p_{k-1}\tens p_{n-1})t_1 \rond \cdots
    \rond (p_{k-q+1}\tens p_{n-q+1}) t_1\text{.}
  \end{displaymath}
  Now, the norm of each morphism $T_{p,p'} := (p_{p+1}\tens p_{p'+1}) t_1$ on the subspace
  of $H_p\tens H_{p'}$ equivalent to $H_l$ is given by \cite[prop.~2.3]{Vergnioux:cayley}:
  \begin{displaymath}
    ||T_{p,p'}\delta(p_l)||^2 = \frac{m_{p+1}}{m_p} \left(1- 
      \frac{m_{p-q}m_{p'-q-1}}{m_{p+1}m_{p'}} \right)  =: 
    \frac{m_{p+1}}{m_p} N_{p,p'}^l \text{,}
  \end{displaymath}
  with $q = \frac{p+p'-l}2$. So the numeric quantity we have to control is the following
  one:
  \begin{displaymath}
    \frac{m_k}{m_{k-q}} \sqrt{\frac{m_l}{m_n m_k}}  ~\cdot~
    (N_{k-q,n-q}^l \cdots N_{k-2,n-2}^l N_{k-1,n-1}^l) \text{.}
  \end{displaymath}
  Using the explicit expression of $m_p$ given at the beginning of the section, it is a
  boring but easy exercise to check that this quantity is bounded from above and from
  below by two non-zero constants independant of $k$, $l$, $n$. 
\end{proof}

\begin{theorem} \label{thm:RD_AO}
  Let $Q \in M_N(\CC)$ be an invertible matrix with $\bar QQ \in \CC I_N$ and $N\geq 3$. Then
  the dual of $A_o(Q)$ has Property RD with respect to the natural word length \iff $Q$ is
  unitary up to a scalar.
\end{theorem}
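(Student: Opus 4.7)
The statement splits into two implications, and the reverse direction is easy to dispose of using the machinery developed earlier. Suppose the dual of $A_o(Q)$ has Property RD. Then Proposition~\ref{prp:unimod_necess} forces the dual to be unimodular, and by the classification of Bichon--De Rijdt--Vaes cited at the beginning of Section~\ref{sec:free_quantum}, this is equivalent to $Q$ being a scalar multiple of a unitary matrix (up to isomorphism one may even reduce to $Q = I_N$ or the symplectic case).

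For the direct implication, assume $Q$ is a scalar multiple of a unitary, so that $F = 1$ in the dual and the twisted Hilbert--Schmidt norm coincides with the usual one. The plan is to verify the characterization of Lemma~\ref{lem:tech_ao}: find $P \in \RR[X]$ such that
\begin{displaymath}
  ||t_1^{q*}\delta(p_l)(b\tens a)\delta(p_l) t_1^q||_{HS} \leq P(n)\, ||b||_{HS}\, ||a||_{HS}
\end{displaymath}
for every $a\in L(H_n)$, $b\in L(H_k)$ and every admissible $l$ in the Temperley--Lieb fusion of $k$ and $n$, with $q = (n+k-l)/2$.

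To control this norm I would decompose $t_1^q$ using the same iteration as in the proof of Lemma~\ref{lem:tech_ao}, namely $t_1^q = (p_{k}\tens p_{n})t_1 \rond (p_{k-1}\tens p_{n-1}) t_1 \rond \cdots \rond (p_{k-q+1}\tens p_{n-q+1})t_1$, and peel off the contractions one at a time, keeping track of the resulting HS norms. Each contraction $t_1^*(\,\cdot\,)t_1$ between two $L(H_p\tens H_{p'})$-spaces has a definite effect on HS norms that combines an operator-norm factor $||T_{p,p'}\delta(p_\bullet)||$ (explicitly evaluated in the proof of Lemma~\ref{lem:tech_ao}) with the quantum-dimension ratios $m_{p+1}/m_p$. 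In the unimodular setting with $N\geq 3$, the asymptotic estimates following the explicit formula $m_n = r^n(1-s^{n+1})/(1-s)$ show that these contributions all balance out up to bounded constants.

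The main obstacle is that a naive bookkeeping gives a factor exponential in $q$, and $q$ can be as large as $n$; one must show that when the HS norm (rather than the operator norm) is estimated, and the symmetry provided by $F = 1$ is exploited, the cumulative effect collapses into a polynomial in $n$. The cleanest way I can envision is a Cauchy--Schwarz estimate of the form $||t_1^*Y t_1||_{HS}^2 \leq ||t_1||^2\, ||Y||_{HS}^2$ applied inductively, together with a combinatorial counting of the Temperley--Lieb diagrams parametrising the contributions; the unimodularity guarantees that the diagram contributions are orthogonal in HS-inner-product and that their multiplicities depend polynomially on $n$ (analogously to the word-counting of Haagerup \cite{Haagerup:MAP} in the classical free-group case). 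This is where the ``purely quantum'' adaptation of Haagerup's argument occurs, and it constitutes the substantive content of the theorem.
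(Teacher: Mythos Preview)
Your treatment of the necessity direction is fine and matches the paper. The gap is in the sufficiency direction: the inductive peeling you propose does not close.

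Concretely, the estimate $\|t_1^{*}Yt_1\|_{HS}\leq \|t_1\|^{2}\|Y\|_{HS}$ iterated $q$ times yields a factor $\|t_1\|^{2q}=m_1^{q}$, and since $q$ can be of order $n$ this is genuinely exponential. Your proposed fix --- orthogonality of Temperley--Lieb diagram contributions and a polynomial multiplicity count --- is not made precise, and in fact no diagram expansion is needed at all. The paper does \emph{not} iterate. It applies the contraction $t_1^{q}$ in one shot, using the trace identity~(\ref{eq:dem_compr}): viewing $H_n\subset H_1^{\tensexp n}$ and $H_k\subset H_1^{\tensexp k}$, one expands
\begin{displaymath}
  p_n a p_n=\ts\sum_I\ds \bar e_I\tens a_I,\qquad p_k b p_k=\ts\sum_I\ds b_I\tens e_I,
\end{displaymath}
where $(e_I)$ is an orthonormal basis of $L(H_1^{\tensexp q})$ for the (untwisted, since $F=1$) Hilbert--Schmidt structure and $\bar e_I=j^{*}e_Ij$. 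Identity~(\ref{eq:dem_compr}) gives $t_1^{q*}(e_I\tens\bar e_J)t_1^{q}=\delta_{I,J}$, so
\begin{displaymath}
  t_1^{q*}(b\tens a)t_1^{q}=\ts\sum_I\ds b_I\tens a_I,
\end{displaymath}
and a single application of Cauchy--Schwarz yields $\|\sum b_I\tens a_I\|_{HS}\leq(\sum\|b_I\|_{HS}^{2})^{1/2}(\sum\|a_I\|_{HS}^{2})^{1/2}=\|b\|_{HS}\|a\|_{HS}$. Thus the criterion of Lemma~\ref{lem:tech_ao} holds with $P\equiv 1$, not merely a polynomial. The point you are missing is precisely this ``diagonalisation'' of the full contraction via~(\ref{eq:dem_compr}); without it, any step-by-step bound is doomed to pick up the exponential overcount that you yourself flagged.
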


\begin{proof}
  We have already seen that the dual of $A_o(Q)$ does not have Property RD when $Q\notin
  \CC U(N)$, and hence we restrict to the case $Q\in U(N)$. Then $F=1$ and in particular
  there is no twisting in the Hilbert-Schmidt structures.  Let $(e_I)$ be a orthonormal
  basis of $L(H^{\tensexp q})$, ie a basis such that $\Tr(e_I^* e_J) = \delta_{I,J}$ for
  all $I$, $J$. We put $\bar e_I = j^* e_I j \in L(H^{\tensexp q})$, in our unimodular
  case $(\bar e_I)$ is again an orthonormal basis.
  
  Take $a \in L(H_n)$ and $b\in L(H_k)$. We consider $H_n$ (resp. $H_k$) as the highest
  homogeneous subspace of $H_1^{\tensexp n}$ (resp. $H_1^{\tensexp k}$) and we simply
  denote by $p_n$ (resp. $p_k$) the corresponding orthogonal projection.  Write $p_n a p_n
  = \sum \bar e_I\tens a_I$ and $p_k b p_k = \sum b_I\tens e_I$ with $a_I \in L(H_{n-q})$,
  $b_I \in L(H_{k-q})$. We have, using the identity~(\ref{eq:dem_compr}):
  \begin{eqnarray*}
    t_1^{q*}(b\tens a)t_1^q &=& (\id\tens t_1^{q*}\tens id) 
    (p_kbp_k\tens p_nap_n) (\id\tens t_1^{q}\tens id) \\ &=& 
    \ts\sum\ds b_I \otimes t_1^{q*}(e_I\tens\bar e_J)t_1^q \otimes a_J
    = \ts\sum\ds b_I \tens a_I \text{.}
  \end{eqnarray*}
  From this we get a first upper bound for the left-hand side of the inequality of
  Lemma~\ref{lem:tech_ao}: since $\delta(p_l) \in L(H_k\tens H_n)$ is an orthogonal
  projection,
  \begin{eqnarray*}
    ||\delta(p_l)t_1^{q*} (b\tens a) t_1^q \delta(p_l)||_{HS} \leq
    ||t_1^{q*} (b\tens a) t_1^q||_{HS} = ||\ts\sum\ds b_I \tens a_I||_{HS} \text{.}
  \end{eqnarray*}
  We then use the triangle and the Cauchy-Schwartz inequalities:
  \begin{displaymath}
    ||\ts\sum\ds b_I \tens a_I||_{HS}^2 \leq 
    \big(\ts\sum\ds ||b_I||_{HS} ||a_I||_{HS}\big)^2 
    \leq \ts\sum\ds ||b_I||_{HS}^2 \ts\sum\ds ||a_I||_{HS}^2 \text{.}
  \end{displaymath}
  Since $(e_I)$, $(\bar e_I)$ are orthonormal bases, we have $||a||_{HS}^2 = \sum
  ||a_I||_{HS}^2$ and $||b||_{HS}^2 = \sum ||b_I||_{HS}^2$. Hence the above estimate shows
  that the condition of Lemma~\ref{lem:tech_ao} for Property RD is fulfilled with $P=1$.
\end{proof}

We will now address briefly the case of the unitary free quantum groups $A_u(Q)$ with
$N\geq 3$. Its definition is similar to the one of $A_o(Q)$, using the relations that make
$U$ and $Q\bar U Q^{-1}$ unitary but not equal anymore. The corepresentation $U$ can then
be considered as a representation of $S$. It comes out that the result of
Theorem~\ref{thm:RD_AO} also holds for the duals of these quantum groups, the heuristic
reason being that $A_u(Q)$ is a mixing of the geometry of $A_o(Q)$ and of the
combinatorics of the free group $F_2$.
  
Let us recall the structure of $\Cat$ from \cite{Banic:U(n)}: $\Irr\Cat$ can be identified
with the free monoid on two generators $U$, $\bar U$ in such a way that the involutive
semi-ring structure is given by $\overline {\alpha U} = \bar U\bar\alpha$, $\overline
{U\alpha} = \bar\alpha\bar U$ and the recursive identities
\begin{eqnarray*}
  \alpha U\tens U\beta = \alpha UU\beta \text{,~~~} 
  \alpha U\tens \bar U\beta = \alpha U\bar U\beta \oplus \alpha\tens\beta \text{.}
\end{eqnarray*}
In particular the word length of the free monoid coincides with the word length on
$\Irr\Cat$ associated to the generating subset $\Dir = \{U, \bar U\}$.

\begin{theorem} \label{thm:RD_AU}
  The dual of $A_u(Q)$, with $Q\in GL_N(\CC)$ and $N\geq 3$, has Property RD with respect
  to the natural word length \iff $Q$ is unitary up to a scalar.
\end{theorem}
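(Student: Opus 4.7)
The only-if direction is immediate from Proposition~\ref{prp:unimod_necess}: if $Q\notin\CC U(N)$, the dual of $A_u(Q)$ is non-unimodular and therefore does not have Property~RD. So I would focus on the unimodular case $Q\in\CC U(N)$, where $F=1$ and the twisted Hilbert--Schmidt norms reduce to the usual ones. My strategy is to reproduce the proof of Theorem~\ref{thm:RD_AO} almost verbatim, with the $SU(2)$-type fusion rules of $A_o(Q)$ replaced by the free-monoid combinatorics on $\{U,\bar U\}$ recalled just above the statement.

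The essential combinatorial input is that $\alpha\tens\beta$ decomposes multiplicity-free. Writing $\alpha=\alpha_1\cdots\alpha_n$ and $\beta=\beta_1\cdots\beta_k$ as reduced words, the summands are indexed by the amount $q$ of cancellation at the boundary (which requires $\beta_i=\bar\alpha_{n-i+1}$ for $i\leq q$) and equal $\alpha_1\cdots\alpha_{n-q}\beta_{q+1}\cdots\beta_k$. Consequently, for every inclusion $\gamma\subset\alpha\tens\beta$ the integer $q=(l(\alpha)+l(\beta)-l(\gamma))/2$ is well-defined, and the corresponding intertwiner has the shape $t^q:=\id\tens(t_{\alpha_{n-q+1}}\rond\cdots\rond t_{\alpha_n})\tens\id$, where each $t_{\alpha_i}:1\to H_{\alpha_i}\tens H_{\bar\alpha_i}$ is the usual normalised pairing. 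This is the exact analogue of the $t_1^q$ of Lemma~\ref{lem:tech_ao}.

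With this in hand I would prove the analogue of Lemma~\ref{lem:tech_ao}: combining Lemma~\ref{lem:norm_convol} with the multiplicity-free decomposition, Property~RD becomes equivalent to a polynomial bound
\begin{displaymath}
  \|t^{q*}\delta(p_\gamma)(b\tens a)\delta(p_\gamma)t^q\|_{HS}\leq P(l(\alpha))\,\|a\|_{HS}\|b\|_{HS}
\end{displaymath}
for all admissible $(\alpha,\beta,\gamma,q)$ and $a\in L(H_\alpha)$, $b\in L(H_\beta)$. The dimensional factor produced by the reduction telescopes across the $q$ cancelled letters into a product of $A_o$-type ratios $N^l_{p,p'}$, which I expect to control by the same elementary computation as in Lemma~\ref{lem:tech_ao}, since cancellation only touches boundary letters and each contributes independently.

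The core inequality is then a verbatim copy of the argument of Theorem~\ref{thm:RD_AO}: pick an orthonormal basis $(e_I)$ of the boundary factor $H_{\alpha_{n-q+1}}\tens\cdots\tens H_{\alpha_n}$, expand $a=\sum\bar e_I\tens a_I$ and $b=\sum b_I\tens e_I$ along the tensor cleavage at the cancellation site, compute $t^{q*}(b\tens a)t^q=\sum b_I\tens a_I$ using identity~(\ref{eq:dem_compr}), and close with the triangle and Cauchy--Schwartz inequalities to obtain the desired bound with $P=1$. The main obstacle I anticipate is purely dimensional bookkeeping: quantum dimensions of irreducibles of $A_u(Q)$ do not satisfy the clean single-variable recursion enjoyed by $A_o(Q)$, so checking that the telescoping coefficients stay polynomially bounded along an arbitrary reduced word in $\{U,\bar U\}$ may require a careful direct estimate, rather than a new geometric idea.
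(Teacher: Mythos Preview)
Your plan correctly recovers the irreducible-level estimate --- what the paper records as~(\ref{eq:df_au}), namely $\|p_\gamma\Ff(p_\alpha a)\Lambda(p_\beta b)\|\leq C\,\|p_\alpha a\|_2\,\|p_\beta b\|_2$ for all irreducibles $\alpha,\beta,\gamma$ --- and the dimensional bookkeeping you anticipate is indeed handled essentially by reduction to the $A_o$ computation. But your assertion that this bound is \emph{equivalent} to Property~RD is where the sketch has a genuine gap. In the orthogonal case there is exactly one irreducible of each length, so the irreducible-level estimate \emph{is} characterization~\ref{item:df_bloc}. In the unitary case there are $2^n$ irreducibles of length $n$ and $p_n=\sum_{l(\alpha)=n}p_\alpha$; passing from~(\ref{eq:df_au}) to $\|p_l\Ff(a)\Lambda(b)\|\leq P(n)\,\|a\|_2\,\|b\|_2$ for $a\in p_n\Ss$, $b\in p_k\Ss$ requires summing over all contributing triples $(\alpha,\beta,\gamma)$, and a naive triangle inequality costs an exponential factor.

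The paper closes this gap with a further combinatorial step adapted from Haagerup's original argument for the free group, which has no counterpart in the $A_o$ proof. For fixed lengths $k,l,n$ and $q=(n+k-l)/2$, the triples $(\alpha,\beta,\gamma)$ with $\gamma\subset\beta\tens\alpha$ are in bijection with triples $(\tau,\alpha',\beta')$ of words of lengths $q$, $n-q$, $k-q$ via $\alpha=\tau\alpha'$, $\beta=\beta'\bar\tau$, $\gamma=\beta'\alpha'$. One first uses orthogonality of the $p_{\beta'\alpha'}$ for distinct $(\alpha',\beta')$ to turn the outer sum into a sum of squares, and then applies Cauchy--Schwartz in $\tau$ inside each block to reassemble $\|a\|_2^2\|b\|_2^2$ without loss. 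This Haagerup-type change of indices is the missing ingredient; the $A_o$ proof cannot be reproduced ``almost verbatim'' precisely because this step is vacuous there, and the real obstacle is this summation, not the dimensional bookkeeping you flagged.
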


\begin{proof}
  Like in the orthogonal case $m_U$ is the geometric mean of $\Tr Q^*Q$ and $\Tr
  (Q^*Q)^{-1}$ and hence the dual of $A_u(Q)$ is unimodular \iff $Q\in\CC U(N)$. When this
  is not the case, we already know that Property RD is not satisfied. Moreover in the
  unimodular case $Q$ can be replaced with $I_N$ without changing the discrete quantum
  group under consideration.
  
  One can then follow the arguments of the orthogonal case to check that the necessary
  condition~(\ref{eq:df_geom}) is still satisfied. As a matter of fact
  Lemma~\ref{lem:tech_ao} relies on the technical result
  \cite[prop.~2.3]{Vergnioux:cayley} which holds in the unitary case for representations
  $\bar\beta$, $\alpha$ of the form $U\bar UU\cdots$, with respective lengths $k$, $n$.
  The reduction to this case is straightforward because $\alpha'UU\bar UU \cdots \simeq
  \alpha'U \tens U\bar UU \cdots$, compare \cite[rem.~6.4.2]{Vergnioux:cayley}. In this
  way one obtains for $A_u(I_N)$ the existence of a positive constant $C$ such that
  \begin{eqnarray} \label{eq:df_au}
    \forall~ \alpha, \beta, \gamma \in \Irr\Cat, ~~ a, b\in\Ss ~~~
    ||p_\gamma \Ff(p_\alpha a) \Lambda(p_\beta b)||_2 \leq 
    C~ ||p_\alpha a||_2 ||p_\beta b||_2 \text{.} \makebox[-1cm]{}
  \end{eqnarray}
  Because the combinatorics of the free monoid $\Irr\Cat$ is analogous (and in fact
  simpler for our purposes) to the one of the free group, one can show that this property
  is in fact sufficient, by adapting the ideas of \cite[lemma~1.3]{Haagerup:MAP} in the
  following way.
  
  Let us fix $n$, $k$, $l \in \NN$, $a\in p_nS$, $b\in p_kS$, and put $q = \frac
  {n-k-l}2$. For any $\alpha$, $\beta$, $\gamma\in\Irr\Cat$ such that $p_\gamma
  \Ff(p_\alpha a) \Lambda(p_\beta b)$ is non zero, there is an inclusion $\gamma \subset
  \beta\tens\alpha$, and hence we can write $\alpha = \tau\alpha'$, $\beta =
  \beta'\bar\tau$ and $\gamma = \beta'\alpha'$ with $l(\tau) = q$. Moreover, all triples
  $(\tau,\alpha',\beta')$ with $l(\tau) = q$, $l(\alpha') = n-q$ and $l(\beta') = k-q$ are
  obtained exactly once in this way. Using this ``change of indices'' one can prove
  Property RD via the last characterization of Definition~\ref{df:RD}. We compute indeed,
  for $a\in p_n\Ss$ and $b\in p_k\Ss$:
  \begin{eqnarray*}
    ||p_l \Ff(a) \Lambda(b)||^2 &=& \Big|\Big| \sum_{\scriptscriptstyle l(\gamma)=l}
    \sum_{l(\alpha)=n \atop l(\beta)=k} 
    p_\gamma \Ff(p_\alpha a) \Lambda(p_\beta b) \Big|\Big|^2 \\
    &=& \sum_{l(\alpha')=n-q \atop l(\beta')=k-q} 
    \Big|\Big| \sum_{\scriptscriptstyle l(\tau)=q} p_{\beta'\alpha'} 
    \Ff(p_{\tau\alpha'} a) \Lambda(p_{\beta'\bar\tau} b)\Big|\Big|^2 \text{,}
  \end{eqnarray*}
  because the projections $p_{\beta'\alpha'}$ are mutually orthogonal for different values of
  $(\beta',\alpha')$. We use then the triangle inequality, (\ref{eq:df_au}) and the
  Cauchy-Schwartz inequality:
  \begin{eqnarray*}
    ||p_l \Ff(a) \Lambda(b)||^2 &\!\!\leq\!\!& C^2 \sum_{\alpha',\, \beta'} \Big(
    \sum_{\tau} ||p_{\tau\alpha'} a||_2 ||p_{\beta'\bar\tau} b||_2 \Big)^2 \\
    &\!\!\leq\!\!& C^2 \sum_{\alpha',\, \beta'} \Big(\ts\sum\limits_\tau\ds 
    ||p_{\tau\alpha'} a||_2^2\Big) \Big({\ts\sum\limits_\tau\ds} 
    ||p_{\beta'\bar\tau} b||_2^2 \Big) = C^2 ||a||_2^2 ||b||_2^2 \text{.}
  \end{eqnarray*}  \par
\end{proof}
                       
\section{$K$-theory}

In this section we will check that the classical applications of Property RD to $K$-theory
still hold in the quantum case. More precisely, if $(S,\delta,L)$ has Property RD we will prove
that the subspaces $\hat H_L^\infty$ and $\hat H_L^s$, for $s$ big enough, are subalgebras of
$\hat S$ having the same $K$-theory as $\hat S$: compare \cite[thm.~A]{Jolissaint:RD_K_th} and
\cite[prop.~1.2]{Lafforgue:RD} respectively. Of course we restrict ourselves to unimodular
discrete quantum groups, since we have seen in Section~\ref{sec:non-unimod} that unimodularity
is necessary for Property RD to hold.

In fact following the methods of \cite{Ji:smooth_subalg} and \cite{Lafforgue:RD} this goes down
to establishing some norm inequalities, which we do at Proposition~\ref{prp:smooth_in_dom} and
Proposition~\ref{prp:laff_alg}. In particular in this section the difficulties of the quantum
generalization are only of technical nature. However, after having presented a definition and
quantum examples, it is also important to know that the applications are still working.

\subsection{The Fr\'echet algebra $\hat H_L^\infty$}

Let $L$ be a closed operator on $H$ admitting $\Hh$ as a core. For any bounded $x\in B(H)$, the
commutator $[L,x]$ is a priori an unbounded operator which needs not to be closable nor densely
defined. Let $\Dom D \subset B(H)$ be the subspace of operators $x$ such that $x\Hh \subset
\Dom L$ and $[L,x]$ is bounded on $\Hh$, and let us denote by $D(x) \in B(H)$ the closure of
$[L,x]$, for $x\in\Dom D$. This defines an unbounded linear map $D : \Dom D \to B(H)$. Because
$L$ is closed, it is a standard fact that $D$ is a closed derivation.

\begin{lemma}
  Let $L$ be a length on the Hopf \Cst algebra $(S,\delta)$ of a discrete quantum group.
  If $p_0$ has finite rank, we have $\hat S \cap \Dom D^k \subset \hat H_L^k$ (as
  subspaces of $H$).
\end{lemma}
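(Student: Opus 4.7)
\medskip

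\noindent\textit{Proof strategy.} My plan is to first identify the image of $\hat H_L^k$ inside $H$ with $\Dom L^k$ equipped with the graph norm of $(1+L)^k$, and then to prove by induction on $k$ that $\hat a e \in \Dom L^k$ for every $\hat a \in \hat S \cap \Dom D^k$; the key ingredient will be the identity $Le = 0$. For the identification, I would use that $L$ is affiliated to $S = \bigoplus L(H_\alpha)$, so it commutes with each central projection $p_\alpha$ and restricts to a bounded operator on the finite-dimensional subspace $p_\alpha H$; consequently $L^k$ preserves $\Hh$ and $\Hh$ is a core for $L^k$ (any $\xi \in \Dom L^k$ is approximated by finite sums of its spectral components, both in $H$-norm and in $||L^k\,\cdot\,||$). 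Combining this with the closedness of $(1+L)^k$ and the inequality $||\cdot||_2 \leq ||\cdot||_{2,k}$ on $\hat\Ss$, the canonical map $\hat H_L^k \to H$ is seen to be injective with image exactly $\Dom L^k$, and $||\cdot||_{2,k}$ is equivalent to the graph norm of $(1+L)^k$. The statement thus reduces to proving $\hat a e \in \Dom L^k$ whenever $\hat a \in \hat S \cap \Dom D^k$.

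The crucial preliminary is $Le = 0$: the vector $e$ lies in the one-dimensional isotypic component $p_{1_\Cat} H \subset \Hh$ of the trivial representation, and $L p_{1_\Cat}$, an element of $p_{1_\Cat} S \simeq \CC\, p_{1_\Cat}$, equals $\varepsilon(L) p_{1_\Cat} = 0$ by the very definition of a length, so $Le = L p_{1_\Cat} e = 0$. (The assumption that $p_0$ has finite rank ensures the kernel of $L$ sits inside $\Hh$ and that this spectral picture near zero is well-behaved.) The induction on $k$ now runs as follows. The base case $k=0$ is trivial. For the step $k \to k+1$, take $\hat a \in \Dom D^{k+1}$: then $\hat a \in \Dom D$ and $D(\hat a) \in \Dom D^k$; since $\hat a$ sends $\Hh$ into $\Dom L$ and $e \in \Hh$, the vector $\hat a e$ lies in $\Dom L$, and the defining property of $D(\hat a)$ as the closure of $[L, \hat a]|_\Hh$ applied at $e$ gives $L\hat a e = \hat a Le + D(\hat a) e = D(\hat a) e$ thanks to $Le = 0$. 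Applying the induction hypothesis --- which I would state more generally for arbitrary $x \in B(H) \cap \Dom D^k$, since $\hat S$-membership plays no role in the step --- to $D(\hat a) \in \Dom D^k$, one obtains $D(\hat a) e \in \Dom L^k$, so $L \hat a e \in \Dom L^k$ and $\hat a e \in \Dom L^{k+1}$, completing the induction.

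The step I expect to be the main obstacle is the identification of $\hat H_L^k$ with $\Dom L^k$ inside $H$: verifying injectivity of the canonical map and, especially, that $\Hh$ is a core for the unbounded operator $L^k$, which must be handled carefully through the algebraic direct sum structure of $\Hh$. The commutator identity $L\hat a e = D(\hat a) e$ is conceptually clean, but its rigorous justification via the closure procedure defining $D(\hat a)$ from $[L, \hat a]|_\Hh$ also deserves care.
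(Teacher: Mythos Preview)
Your approach is correct and matches the paper's: both establish $Le=0$ from $\varepsilon(L)=0$, prove by induction that $x\in\Dom D^k$ implies $xe\in\Dom L^k$ (the paper additionally carries along the identity $D^k(x)e = L^k(xe)$, which is equivalent to your formulation via $D(\hat a)$), and then identify $\hat H_L^k$ with the relevant domain in $H$. The one place where the paper is sharper is the role of the finite-rank hypothesis on $p_0$: it is used precisely, via the inequality $L(1-p_0)\leq(1+L)(1-p_0)\leq 2L(1-p_0)$, to conclude $\Dom L^k=\Dom(1+L)^k$ (the paper takes $\hat H_L^k=\Dom(1+L)^k$ as definitional), whereas your invocation of the hypothesis is vaguer.
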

  
\begin{proof} \label{lem:dom_is_smooth}
  Let $e \in H$ be a co-fixed unit vector for $V$ and $\Lambda : a \mapsto \Ff(a) e$ the
  associated GNS map for $h_R$. Since $\varepsilon(L) = 0$ we have $Le = 0$: one can indeed
  check that $\Lambda(p_\varepsilon)$ is a multiple of $e$, using the expression of $V$ on the
  image of $\Lambda\tens\Lambda$. In particular we have $D(x)e = Lxe$ for any $x\in\Dom D$.
  
  It is easy to check by induction that for any $x\in\Dom D^k$ we have $xe\in\Dom L^k$ and
  $D^k(x)e = L^k(xe)$: if this holds, let $x$ be in the domain of $D^{k+1}$, we have
  $D^k(x) \in \Dom D$ so that $D^k(x)e = L^k(xe) \in \Dom L$, and hence $xe \in \Dom
  L^{k+1}$. Moreover $D^{k+1}(x)e = D(D^k(x))e = LD^k(x)e = L^{k+1}xe$. In particular if
  $\hat a \in \hat S \cap \Dom D^k$, then $a e \in \Dom L^k$.
  
  Now observe that $L(1-p_0) \leq (1+L) (1-p_0) \leq 2L(1-p_0)$. Hence if $p_0$ has finite
  rank we have $\Dom L^k = \Dom (1+L)^k$. This concludes the proof because
  $\hat H_L^k = \Dom (1+L)^k$ by definition. 
\end{proof}

\begin{proposition} \label{prp:smooth_in_dom}
  Let $L$ be a central length on the Hopf \Cst algebra $(S,\delta)$ of a unimodular
  discrete quantum group. For any $k\in\NN$ we have $\hat\Ss \subset \Dom D^k$. Moreover
  if $(S,\delta,L)$ has Property RD with constants $s$, $C$ we have
  \begin{displaymath}
    \forall k\in\NN,~ \hat a\in\hat\Ss ~~~ ||D^k(\hat a)|| \leq 4C ||\hat a||_{2,s+k} \text{.}
  \end{displaymath}
\end{proposition}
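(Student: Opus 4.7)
I would prove the statement in two steps: (i) the domain assertion $\hat\Ss\subset\Dom D^k$, and (ii) the norm estimate.

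For (i), the key fact from Lemma~\ref{lem:triangles} is that $\hat a=\Ff(a)$, for $a\in\Ss$, preserves the algebraic direct sum $\Hh$ of spectral subspaces of $L$, because $p_l\hat a p_k\neq0$ forces $|l-k|$ to lie in a bounded interval determined by the support of $a$. Consequently $D^k(\hat a)$ is well defined on $\Hh\subset\Dom L^\infty$ for every $k$, and the norm bound from~(ii) shows that it extends boundedly to $H$.

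For (ii), the pivotal identity (proved by induction on $k$) is
\[
  D^k(\hat a)\,\Lambda(b) \;=\; \Lambda\bigl(\Cc\bigl((\delta(L)-L\tens 1)^k\,(b\tens a)\bigr)\bigr),\qquad a,b\in\Ss.
\]
This follows from the base case $k=1$, which in turn is obtained by combining $L\Lambda(b)=\Lambda(Lb)$, $\hat a\,\Lambda(b)=\Lambda(\Cc(b\tens a))$, and the identity $\Cc(\delta(s)\cdot c)=s\,\Cc(c)$ (extended to the multiplier $s=L$). The central spectral observation is that on each block $p_\beta\tens p_\alpha$ with $\alpha,\beta\in\Irr\Cat$, the self-adjoint operator $\delta(L)-L\tens 1$ has eigenvalues $l(\gamma)-l(\beta)$ for $\gamma\subset\beta\tens\alpha$, each bounded in absolute value by $l(\alpha)$ by Lemma~\ref{lem:triangles}. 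Since $1\tens L$ acts as the scalar $l(\alpha)$ on this block, $|\delta(L)-L\tens 1|$ and $1\tens L$ commute, with the former dominated by the latter. Using $\|Tc\|_2=\||T|c\|_2$ for self-adjoint $T$, together with the trace property of $h_R$ (unimodular case) to propagate the inequality under squaring, I would derive
\[
  \|(\delta(L)-L\tens 1)^k\,(b\tens a)\|_2 \;\leq\; \|(1\tens L)^k\,(b\tens a)\|_2 \;=\; \|b\|_2\,\|L^k a\|_2.
\]

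To convert this $2$-norm estimate on the argument of $\Cc$ into an operator-norm bound on $D^k(\hat a)$, decompose $a=\sum_{j=1}^{4}i^{j-1}a_j$ into four positive parts, as in the proof of Proposition~\ref{prp:growth}. The Schwarz-type inequality $\|\Cc(b\tens a_j)\|_2\leq\|\Ff(a_j)\|\,\|b\|_2$, combined with Property RD applied to $\Ff(L^k a_j)$ and with $\|L^k a_j\|_{2,s}\leq\|a_j\|_{2,s+k}$, gives a bound on $\|D^k(\Ff(a_j))\|$ in terms of $\|a_j\|_{2,s+k}$. Summing over $j$ and using the trace-based inequality $\sum_j\|a_j\|_{2,s+k}\leq 4\|\hat a\|_{2,s+k}$ yields the constant $4C$ in the asserted bound.

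The hardest step will be the last one: the classical group-theoretic argument bounds $\|[L,\hat a]\xi\|$ by a convolution with $l\cdot|a|$, using a pointwise absolute value unavailable in the quantum setting. The decomposition into four positive pieces and the positivity-based application of the Schwarz inequality together with Property RD for each piece are precisely what replace the pointwise absolute value, at the cost of the factor $4$ in the final constant.
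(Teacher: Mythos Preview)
Your identity
\[
  D^k(\hat a)\,\Lambda(b) \;=\; \Lambda\bigl(\Cc\bigl((\delta(L)-L\tens 1)^k\,(b\tens a)\bigr)\bigr)
\]
and the operator inequality $|\delta(L)-L\tens 1|\leq 1\tens L$ are correct and match the paper's setup. The gap is in your step~(ii), last paragraph: the $2$-norm estimate $\|(\delta(L)-L\tens 1)^k(b\tens a)\|_2\leq\|b\|_2\,\|L^ka\|_2$ lives in $\Ss\tens\Ss$, but $\Cc$ is \emph{not} bounded from $L^2(\Ss\tens\Ss)$ to $L^2(\Ss)$, so this bound does not transfer to $\|\Cc(\cdots)\|_2$. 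Your ``Schwarz-type inequality'' $\|\Cc(b\tens a_j)\|_2\leq\|\Ff(a_j)\|\,\|b\|_2$ applies only to a simple tensor, whereas $(\delta(L)-L\tens 1)^k(b\tens a_j)$ mixes the two legs through $\delta$; it is not of the form $b'\tens a''$, so invoking Property~RD for $\Ff(L^ka_j)$ is unjustified. Decomposing $a$ alone into positive parts does not cure this.

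The paper's remedy is to pass to the adjoint, so that the commutator factor lands on $\delta(b)$ rather than on $b\tens a$:
\[
  D^k(\Ff a)^*\Lambda(b)=(\Lambda\tens h_R)\bigl((1\tens a^*)(\delta(L)-L\tens 1)^k\delta(b)\bigr).
\]
With \emph{both} $a$ and $b$ positive, one multiplies the operator inequality $-(1\tens L)^k\leq(\delta(L)-L\tens 1)^k\leq(1\tens L)^k$ by the commuting positive element $\delta(b)$ and slices by the positive functional $\id\tens h_R(\,\cdot\,a)$ (positivity uses traciality of $h_R$). This produces an inequality $-s\leq t\leq s$ in $\Ss$ with $s\geq 0$, and traciality again gives $\|t\|_2\leq\|s\|_2$, i.e.\ $\|D^k(\Ff a)^*\Lambda(b)\|\leq\|\Ff(L^ka)^*\Lambda(b)\|\leq\|\Ff(L^ka)\|\,\|b\|_2$. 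Generalizing first over $b$ and then over $a$ by the four-positive-parts trick costs a factor $2$ each time, which is where the constant $4$ comes from. The point you are missing is that the inequality has to be brought down to the level of $\Ss$ \emph{before} taking the $2$-norm, and that step needs positivity of both variables.
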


\begin{proof}
  Proceeding by induction, we assume that the result holds for $k-1$. For $\hat
  a\in\hat\Ss$ it is clear that $D^{k-1}(\hat a)$ stabilizes $\Hh$, and hence $[L,
  D^{k-1}(\hat a)]$ is defined on $\Hh$ as well as its adjoint. We denote this operator by
  $D^k(\hat a)$ and we want to show that it is bounded. For $a\in\Ss$ it is easy to check
  by induction that
  \begin{displaymath}
    D^k(\Ff a) = (\id\tens h_R)(V^* (\delta(L) - L\tens 1)^k(1\tens a)) \text{.}
  \end{displaymath}
  Note that this is just the definition of $\Ff(a)$ for $k=0$, and use
  the identity $(L\tens 1) V^* = V^* \delta(L)$ to proceed to the
  induction. Using the expression of $V$ on the image of $\Lambda\tens\Lambda$ recalled in 
  Section~\ref{sec:notations} we obtain
  \begin{displaymath}
    D^k(\Ff a)^* \Lambda(b) = 
    (\Lambda\tens h_R)((1\tens a^*) (\delta(L) - L\tens 1)^k \delta(b)) \text{.}
  \end{displaymath}
  
  We first assume that $a$, $b \in \Ss$ are positive. By the first point of
  Lemma~\ref{lem:triangles} and since $\delta(b)$ commutes to $\delta(L) -
  L\tens 1$ on $\Hh\tens\Hh$ we have
  \begin{eqnarray*}
    -(1\tens L^k) \delta(b) \leq (\delta(L) - L\tens 1)^k \delta(b) 
    \leq (1\tens L^k) \delta(b) \text{.}
  \end{eqnarray*}
  Because $h_R$ is central, this yields
  \begin{displaymath}
    (\id\tens h_R)((1\tens a) (\delta(L) - L\tens 1)^k \delta(b)) \leq
    (\id\tens h_R)((1\tens aL^k) \delta(b)) 
  \end{displaymath}
  and similarly with the left inequality. But one can check that, for a central weight
  $\varphi$, inequalities of the form $-s \leq t\leq s$ with $t = t^*\in\Ss$ and
  $s\in\Ss_+$ imply the inequality $||t||_\varphi\leq ||s||_\varphi$ of the GNS norms. As
  a result we obtain
  \begin{displaymath}
    ||D^k(\Ff a)^* \Lambda(b)||_2 \leq ||\Ff(L^ka)^*\Lambda(b)||_2 \leq
    ||\Ff(L^ka)|| ~ ||b||_2 \text{.}
  \end{displaymath}
  
  This result is then easily generalized to any $b\in\Ss$, exactly like in the proof of
  Proposition~\ref{prp:growth}. Hence we have shown that $D^k(\Ff a)$ is bounded. Moreover if
  Property RD is satisfied we have the following estimate on its norm:
  \begin{displaymath}
    ||D^k(\Ff a)|| \leq 2~ ||\Ff(L^ka)|| \leq 2C~ ||L^ka||_{2,s} 
    \leq 2C~ ||a||_{2,s+k}\text{.}
  \end{displaymath}
  Again this can be generalized to any $a\in\Ss$ and we get the estimate of the statement,
  with $\hat a = \Ff(a)$. 
\end{proof}

\begin{corollary}
  Let $(S,\delta)$ be the Hopf \Cst algebra of a unimodular, finitely generated discrete
  quantum group with Property RD, and $L$ a word length on it. Then $\hat H_L^\infty$ is dense
  in $\hat S$ and coincides with $\bigcap \Dom D^k \cap \hat S$. In particular it is a dense
  subalgebra which is stable under holomorphic functional calculus in $\hat S$, and the
  inclusion $\hat H_L^\infty \subset \hat S$ induces isomorphisms in $K$-theory.
\end{corollary}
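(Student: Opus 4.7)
The plan is to identify $\hat H_L^\infty$ with the smooth domain $\bigcap_k \Dom D^k \cap \hat S$ of the closed derivation $D$, and then invoke standard results about spectral invariance of smooth Fr\'echet subalgebras.

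First, by the remark following Definition~\ref{df:RD}, finite generation ensures that Property RD for $(S,\delta)$ implies Property RD for $(S,\delta, L)$. In particular, characterization~\ref{item:df_dual} provides constants $C,s$ with $\|\hat b\|\leq C\|\hat b\|_{2,s}$, characterization~\ref{item:df_rapid} gives $\hat H_L^\infty \subset \hat S$, and the estimate of Proposition~\ref{prp:smooth_in_dom} applies. Moreover, since $L$ is a word length one has $p_0 = p_{1_\Cat}$, which has rank $\dim H_{1_\Cat} = 1$, so Lemma~\ref{lem:dom_is_smooth} is also applicable.

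I would then prove $\hat H_L^\infty = \bigcap_k \Dom D^k \cap \hat S$ as subspaces of $H$. The inclusion $\supset$ follows by intersecting the inclusions of Lemma~\ref{lem:dom_is_smooth} over $k \in \NN$. For $\subset$, fix $\hat a \in \hat H_L^\infty$ and $k\in\NN$; by definition of $\hat H_L^{s+k}$ as the completion of $\hat\Ss$, pick $\hat a_n \in \hat\Ss$ with $\|\hat a_n - \hat a\|_{2,s+k} \to 0$. Characterization~\ref{item:df_dual} forces $\hat a_n \to \hat a$ in operator norm, while Proposition~\ref{prp:smooth_in_dom} shows that $(D^j(\hat a_n))_n$ is Cauchy in operator norm for every $j \leq k$. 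A straightforward induction on $j$, using the closedness of the derivation $D$, then yields $\hat a \in \Dom D^j$ with $D^j(\hat a) = \lim_n D^j(\hat a_n)$ for all $j \leq k$; letting $k$ vary gives $\hat a \in \bigcap_k \Dom D^k$.

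Density of $\hat H_L^\infty$ in $\hat S$ follows from $\hat\Ss \subset \hat H_L^\infty$ together with the density of $\hat\Ss$ in $\hat S$. The space $\hat H_L^\infty$ is Fr\'echet as the projective limit of the Banach spaces $\hat H_L^k$, and it is a subalgebra of $\hat S$ because $D$ is a derivation and hence $\bigcap_k \Dom D^k$ is multiplicatively closed. Stability under holomorphic functional calculus is a classical feature of smooth domains of closed derivations on Banach algebras, inherited at each finite order from the formula $D(y^{-1}) = -y^{-1}D(y)y^{-1}$ and then intersected; the corresponding isomorphism in $K$-theory follows from the spectral-invariance theorem employed in \cite{Lafforgue:RD,Jolissaint:RD_K_th}.

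The main technical obstacle is the identification $\hat H_L^\infty = \bigcap_k \Dom D^k \cap \hat S$, which requires a careful interplay between the operator norm, the Sobolev norms $\|\cdot\|_{2,s+k}$, and iterated applications of the closedness of $D$. The remaining Fr\'echet-algebra and $K$-theoretic conclusions are then standard invocations of well-known machinery.
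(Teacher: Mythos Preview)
Your proposal is correct and follows essentially the same route as the paper: one inclusion via Lemma~\ref{lem:dom_is_smooth} (using that $p_0 = p_{1_\Cat}$ has finite rank for a word length), the other via Proposition~\ref{prp:smooth_in_dom} combined with an induction on the closedness of $D$, and then an appeal to the standard machinery of smooth domains of closed derivations for holomorphic functional calculus stability and the $K$-theory isomorphism. The only cosmetic difference is that the paper cites \cite{Ji:smooth_subalg} and \cite{Valette:livre_BC} explicitly for the last step, whereas you invoke the formula $D(y^{-1}) = -y^{-1}D(y)y^{-1}$ and refer to \cite{Lafforgue:RD,Jolissaint:RD_K_th}.
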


\begin{proof}
  Let $s$ be an exponent realizing Property RD, and $k\in\NN$. Let $\hat a$ be an element of
  $\hat H_L^{k+s}$: there exists a sequence $\hat a_n$ in $\hat\Ss$ converging to $\hat a$ in
  the $(2,k+s)$-norm. It is easy to check by induction that $\hat a\in\Dom D^l$ and $D^l \hat
  a_n \to D^l\hat a$ for $l= 0, \ldots, k$. As a matter of fact, $(\hat a_n)_n$ converges in
  particular in the $(2,l+s)$ norm, hence by Proposition~\ref{prp:smooth_in_dom} the sequence
  $D^l \hat a_n = D(D^{l-1}\hat a_n)$ has a limit in $B(H)$. Since $D^{l-1}\hat a_n \to
  D^{l-1}\hat a$ and $D$ is closed, this implies that $D^{l-1}\hat a \in \Dom D$ and $D^l \hat
  a_n \to D^l\hat a$.
  
  Hence we have proved that $\hat H_L^{k+s} \subset \Dom D^k \cap \hat S$ for all $k$.  Because
  we are using a word length, the hypothesis of Lemma~\ref{lem:dom_is_smooth} is satisfied and
  we also have $\Dom D^k \cap \hat S \subset \hat H_L^k$. This proves that $\hat H_L^\infty =
  \bigcap \Dom D^k \cap \hat S$. This subspace is dense because it contains $\hat\Ss$. It is
  then a general fact for closed derivations that $\hat H_L^\infty$ is a dense subalgebra
  stable under holomorphic functional calculus in $\hat S$, cf
  \cite[thm.~1.2]{Ji:smooth_subalg}. This implies in turn that the canonical inclusion induces
  isomorphisms in $K$-theory, cf e.g. \cite[prop.~8.14]{Valette:livre_BC} for a recent
  statement of this classical result. 
\end{proof}

\subsection{The Banach algebras $\hat H_L^s$}

We go on with the generalization of Lafforgue's result and start with a Lemma which is
proved using the same techniques as for Proposition~\ref{prp:smooth_in_dom}:

\begin{lemma} \label{lem:laff_ineq}
  Let $L$ be a central length on the Hopf \Cst algebra $(S,\delta)$ of a unimodular
  discrete quantum group. For any $(a_i) \in \Ss_+^n$ and $s, t\geq 0$ we have
  \begin{displaymath}
    ||\Ff(a_1) \cdots \Ff(a_n)||_{2,s+t} \leq n^t \ts\sum_i\ds
    ||\Ff(a_1) \cdots \Ff((1+L)^t a_i) \cdots \Ff(a_n)||_{2,s} \text{.}
  \end{displaymath}
\end{lemma}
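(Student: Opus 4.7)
The plan is to transport the inequality through the Fourier transform to the convolution picture, establish a key operator inequality in $M(S^{\tensexp n})$, transfer it down to $\Ss$ via $\Cc$, and conclude by tracial $L^2$-monotonicity. Since $\Ff$ restricts to an isometry $H_L^r \to \hat H_L^r$ for every $r\geq 0$ (from $\Lambda(a) = \Ff(a) e$ and $L\Lambda(a) = \Lambda(La)$) and $\Ff(\Cc(a_n\tens\cdots\tens a_1)) = \Ff(a_1)\cdots\Ff(a_n)$, setting $A := a_n\tens\cdots\tens a_1 \in (S^{\tensexp n})_+$, $c := \Cc(A)$, and $c_i := \Cc((1+L)^t_i A)$ where $(1+L)^t_i$ denotes $(1+L)^t$ in slot $i$ and $1$ elsewhere, the inequality reduces to
$$\|(1+L)^{s+t} c\|_2 \leq n^t \sum_i \|(1+L)^s c_i\|_2.$$
The elements $c$ and each $c_i$ lie in $\Ss_+$ by iteration of the fact that $\Cc$ preserves positivity, which itself follows from $h_R^{\tensexp n}\rond\delta^{n-1}$ being a positive functional and from faithfulness of $h_R$.

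Next, I would establish the operator inequality $\delta^{n-1}((1+L)^t) \leq n^t \sum_i (1+L)^t_i$ in $M(S^{\tensexp n})$. Easy induction from $\delta(L) \leq L\tens 1 + 1\tens L$ yields $\delta^{n-1}(1+L) \leq \sum_i (1+L)_i$, and the two positive sides commute since $\sum_i (1+L)_i$ is central in $M(S^{\tensexp n})$ (each $(1+L)_i$ being a tensor factor of a central multiplier). Joint functional calculus for commuting positive operators then gives $(\delta^{n-1}(1+L))^t \leq (\sum_i(1+L)_i)^t$, and the elementary scalar bound $(\sum y_i)^t \leq n^t\sum y_i^t$ on each joint spectral block (indexed by $(\alpha_1,\ldots,\alpha_n) \in \Irr\Cat^n$, with $(1+L)_i$ acting as $1+l(\alpha_i)$) finishes the bound. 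Multiplying by the commuting positive $\delta^{n-1}((1+L)^s)$ yields
$$\delta^{n-1}((1+L)^{s+t}) \leq n^t \sum_i \delta^{n-1}((1+L)^s)(1+L)^t_i.$$

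The main obstacle is the transfer of this operator inequality to $\Ss_+$ via $\Cc$: the naive claim ``$\Cc(ZA) \geq 0$ for $Z, A \geq 0$'' fails in general, because $Z$ and $A$ need not commute. The pivotal lemma is rather: \emph{for $A \in (S^{\tensexp n})_+$ and $Z \in M(S^{\tensexp n})_+$ commuting with $\delta^{n-1}(S)$, one has $\Cc(ZA) \geq 0$}. Indeed, for $z \in S_+$,
$$h_R(z\, \Cc(ZA)) = h_R^{\tensexp n}(\delta^{n-1}(z)\, Z\, A) = h_R^{\tensexp n}\bigl(A^{1/2}\, \delta^{n-1}(z)^{1/2}\, Z\, \delta^{n-1}(z)^{1/2}\, A^{1/2}\bigr) \geq 0,$$
by traciality of $h_R^{\tensexp n}$ (which is where unimodularity enters) and $[Z,\delta^{n-1}(z)]=0$; faithfulness of $h_R$ then gives $\Cc(ZA)\geq 0$. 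Applied to $Z := n^t\sum_i\delta^{n-1}((1+L)^s)(1+L)^t_i - \delta^{n-1}((1+L)^{s+t})$, which commutes with $\delta^{n-1}(S)$ because $(1+L)$ is central in $S$ and each $(1+L)^t_i$ is central in $M(S^{\tensexp n})$, this yields $(1+L)^{s+t} c \leq n^t\sum_i (1+L)^s c_i$ in $\Ss_+$. To conclude, the tracial $L^2$-monotonicity $\|X\|_2 \leq \|Y\|_2$ for $0\leq X\leq Y$ in $\Ss_+$ (obtained from $h_R(Y^2 - X^2) = h_R((Y+X)^{1/2}(Y-X)(Y+X)^{1/2}) \geq 0$ by cyclicity) combined with the triangle inequality yields the stated bound. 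The delicacy of the transfer step is what forces both hypotheses of the lemma --- centrality of $L$ and unimodularity of $(S,\delta)$ --- to enter in an essential way.
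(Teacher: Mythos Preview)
Your argument is correct, and the overall architecture --- iterate the length inequality, pass to the $t$-th power via the scalar bound $(1+\sum x_i)^t \leq n^t \sum (1+x_i)^t$ using commutativity, and finish by tracial $L^2$-monotonicity --- is the same as the paper's. The packaging, however, is genuinely different in two respects. First, the paper avoids the convolution map $\Cc$ altogether: it uses the unimodular identity $\Ff(a)^* = \Ff(\kappa(a^*))$ to rewrite $\Ff(a_1)\cdots\Ff(a_n)e$ as $(\Lambda\tens h_R^{\tensexp n-1})((1\tens a_1\tens\cdots\tens a_{n-1})\,\delta^{n-1}(a_n))$, so the Sobolev weight $(1+L)^{s+t}$ sits in the \emph{first} tensor slot. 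This forces the paper to use the \emph{reverse} triangle inequality of Lemma~\ref{lem:triangles}.\ref{item:lem_it}, in the iterated form $L\tens 1^{\tensexp n-1} \leq 1\tens L\tens 1^{\tensexp n-2} + \cdots + \delta^{n-1}(L)$, whereas you use the forward axiom $\delta^{n-1}(L) \leq \sum_i L_i$. Second, in place of your pivotal lemma --- $\Cc(ZA)\geq 0$ when $Z\geq 0$ commutes with $\delta^{n-1}(S)$ --- the paper simply multiplies the operator inequality by $(f_s(L)\tens 1^{\tensexp n-1})\delta^{n-1}(a_n)$, applies the positive functional $\id\tens h_Ra_1\tens\cdots\tens h_Ra_{n-1}$, and invokes the tracial fact that $-s\leq t\leq s$ implies $\|t\|_2 \leq \|s\|_2$. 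Your route isolates a reusable positivity principle for $\Cc$ and needs only the defining inequality for $L$; the paper's route is more hands-on but dispenses with the convolution formalism and the reference to Lemma~\ref{lem:triangles} is explicit.
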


\begin{proof}
  Using the identity $(\id\tens\kappa)(V) = V^*$ and the fact that $h_R\kappa^{-1}= h_R$
  in the unimodular case, one sees that $\Ff(a)^* = \Ff(\kappa(a^*))$. Since
  $\kappa(\Ss_+) = \Ss_+$ by unimodularity and $\kappa(L^*) = L$ by hypothesis, this
  allows to replace on both sides of the statement the first $n-1$ terms $\Ff(\cdot)$ by
  $\Ff(\cdot)^*$. In this way we avoid using $\kappa$ in the rest of the proof. We have
  indeed, using the identity $V_{12}V_{13}\cdots V_{1n} = (\id\tens\delta^{n-2})(V)$:
  \begin{eqnarray*}
    \Ff(a_1)^* \cdots \Ff(a_{n-1})^*\Ff(a_n) e &=&
    \Ff(a_1)^* \cdots \Ff(a_{n-1})^* \Lambda(a_n) \\ &=&
    (\Lambda\tens h^{\tensexp n-1}) ((1\tens a_{1}\tens\cdots\tens a_{n-1}) 
    \delta^{n-1}(a_n)) \text{.}
  \end{eqnarray*}
  
  Like in the classical case the proof relies on the following elementary inequality. For
  $t\geq 0$ the function $f_t : x \mapsto (1+x)^t$ is growing on $\RR_+$ and hence we have, for
  any $(x_i) \in \RR_+^{n}$:
  \begin{eqnarray*}
    (1+\ts\sum\ds x_i)^t &\leq& n^t (1+\ts\sum \frac {x_i}{n}\ds)^t \\
    &\leq& n^t (1+ \max x_i)^t \leq n^t \ts\sum\ds (1+x_i)^t \text{.}
  \end{eqnarray*}
  We apply this inequality to the following iterated version of the first point of
  Lemma~\ref{lem:triangles}, whose right-hand side is a sum of $n$ commuting terms:
  \begin{eqnarray*}
    && L\tens 1\tens\cdots\tens 1 \leq 1\tens L\tens 1\tens\cdots + \ldots + 
    1\tens \cdots\tens 1\tens L + \delta^{n-1}(L)  \\
    \Longrightarrow && f_t(L)\tens 1^{\tensexp n-1} \leq n^t \left(
      1\tens f_t(L)\tens 1^{\tensexp n-2} + \ldots + \delta^{n-1}(f_t(L)) \right) \text{.}
  \end{eqnarray*}
  
  This inequality can be multiplied by $(f_s(L)\tens 1^{\tensexp n-1})\delta^{n-1}(a_n)$, which
  is positive and commutes to all the terms. Applying moreover the positive functional
  $\id\tens h_Ra_1\tens \cdots\tens h_Ra_{n-1}$ we obtain
  \begin{eqnarray*}
    && \makebox[-.8cm]{} (f_{s+t}(L)\tens h^{\tensexp n-1}) 
    ((1\tens a_{1}\tens\cdots\tens a_{n-1}) \delta^{n-1}(a_n)) \leq \\
    && \makebox[.8cm]{} \leq n^t (f_s(L)\tens h^{\tensexp n-1}) 
    ((1\tens f_t(L)a_1\tens a_2\tens \cdots\tens a_{n-1}) \delta^{n-1}(a_n)) + \\
     && \makebox[1.2cm]{} + \ldots + n^t (f_s(L)\tens h^{\tensexp n-1}) 
     ((1\tens a_{1}\tens\cdots\tens a_{n-1}) \delta^{n-1}(f_t(L) a_n)) \text{.}
  \end{eqnarray*}
  Since $h_R$ is tracial, this inequality between positive elements of $\Ss$ implies the
  inequality of their $2$-norms and finally, by the triangle inequality, the inequality of 
  the statement.
\end{proof}

\begin{proposition} \label{prp:laff_alg}
  Let $L$ be a central length on the Hopf \Cst algebra $(S,\delta)$ of a unimodular discrete
  quantum group. Assume that $(S,\delta, L)$ has Property RD with exponent $s_0$. For any
  $s\geq s_0$, $\hat H_L^s$ is a Banach subalgebra of $\hat S$. Moreover for any $t\geq 0$
  there exists a constant $K_{s,t}>0$ such that
  \begin{displaymath}
    \forall \hat a\in\hat\Ss ~~ \exists C>0 ~~
    \forall n\in\NN~~ ||\hat a^n||_{2,s+t} \leq C K_{s,t}^{n} ||\hat a||_{2,s}^{n} \text{.}
  \end{displaymath}
\end{proposition}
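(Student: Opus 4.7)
The plan is to apply Lemma~\ref{lem:laff_ineq} as the main engine, combined with Property~RD and with an auxiliary ``dual'' form of Property~RD that becomes available in the unimodular case. Starting from $||\Ff(x)\Lambda(y)|| = ||\Ff(x)\Ff(y)||_2$ and using that $\hat h$ is a trace in the unimodular case, one takes adjoints to write $||\Ff(x)\Ff(y)||_2 = ||\Ff(y)^*\Ff(x)^*||_2$; combining the identity $\Ff(a)^* = \Ff(\kappa(a^*))$ with the $\kappa$- and $*$-invariance of $||\cdot||_2$ (both consequences of unimodularity) yields $||\Ff(x)\Lambda(y)|| \leq ||\Ff(y)||\,||x||_2$. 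Together with the obvious bound $||\Ff(x)\Lambda(y)|| \leq ||\Ff(x)||\,||y||_2$ and Property~RD of exponent $s_0$, this produces the bilateral estimate $||\Ff(x)\Lambda(y)|| \leq C\min(||x||_{2,s_0}||y||_2,\;||x||_2||y||_{2,s_0})$ for all $x,y \in \Ss$.

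For the Banach algebra property, I would apply Lemma~\ref{lem:laff_ineq} with $n=2$, starting exponent $0$ and increment $s$, to positive elements $a,b \in \Ss_+$:
\begin{equation*}
  ||\Ff(a)\Ff(b)||_{2,s} \leq 2^s\bigl(||\Ff((1+L)^s a)\Lambda(b)|| + ||\Ff(a)\Lambda((1+L)^s b)||\bigr).
\end{equation*}
The first summand, treated via the dual estimate, is bounded by $C\,||b||_{2,s_0}\,||(1+L)^s a||_2 = C\,||b||_{2,s_0}\,||a||_{2,s}$; the second, treated via Property~RD in the usual direction, by $C\,||a||_{2,s_0}\,||b||_{2,s}$. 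Since $s \geq s_0$ both are dominated by $C\,||a||_{2,s}\,||b||_{2,s}$, giving $||\Ff(a)\Ff(b)||_{2,s} \leq K_s\,||a||_{2,s}\,||b||_{2,s}$ with $K_s = 2^{s+1}C$. Extension to arbitrary $\hat a,\hat b \in \hat\Ss$ follows by decomposing the preimages in $\Ss$ as sums of four positive elements and summing the $16$ cross-terms, using the orthogonality-type inequality $||a_k||_{2,s} \leq ||a||_{2,s}$ which follows from traciality of $h_R$ and centrality of $L$.

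For the iterated estimate, I would apply Lemma~\ref{lem:laff_ineq} to $a_1 = \cdots = a_n = a \in \Ss_+$:
\begin{equation*}
  ||\Ff(a)^n||_{2,s+t} \leq n^t \sum_{i=1}^n ||\Ff(a)^{i-1}\Ff((1+L)^t a)\Ff(a)^{n-i}||_2,
\end{equation*}
bound each summand by its $(2,s)$-norm, and iterate the Banach algebra inequality $n-1$ times, using the isometry $||\Ff(c)||_{2,s} = ||c||_{2,s}$, to obtain
\begin{equation*}
  ||\Ff(a)^{i-1}\Ff((1+L)^t a)\Ff(a)^{n-i}||_{2,s} \leq K_s^{n-1}\,||a||_{2,s}^{n-1}\,||a||_{2,s+t}.
\end{equation*}
This yields $||\Ff(a)^n||_{2,s+t} \leq n^{t+1}\,K_s^{n-1}\,||a||_{2,s}^{n-1}\,||a||_{2,s+t}$; setting $K_{s,t} = 2K_s$, the factor $n^{t+1}/2^n$ is bounded uniformly in $n$ and the ratio $||a||_{2,s+t}/||a||_{2,s}$ is a fixed number depending only on $\hat a$, so they combine into the required constant $C$. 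Passage to general $\hat a \in \hat\Ss$ is again by the four-term positive decomposition of $\Ff^{-1}(\hat a)$: $\hat a^n$ expands into a sum of $4^n$ products of positive-element Fourier transforms, each controlled by the multilinear analogue of the above estimate, and the factor $4^n$ is absorbed by enlarging $K_{s,t}$.

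The main obstacle is the dual Property~RD estimate of the first paragraph: it is precisely there that unimodularity enters essentially, through traciality of the Haar weights and $\kappa$-invariance of the $||\cdot||_2$-norm. Without it, the Banach algebra inequality in $\hat H_L^s$ cannot be obtained by this method, consistently with the fact that Proposition~\ref{prp:unimod_necess} already requires unimodularity for Property~RD to hold at all. Everything else reduces to a careful application of Lemma~\ref{lem:laff_ineq}, iterated use of the Banach algebra property, and routine bookkeeping of constants.
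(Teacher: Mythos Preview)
Your proof is correct and follows essentially the same route as the paper's: apply Lemma~\ref{lem:laff_ineq} with $n=2$ and the adjoint/traciality trick for the Banach-algebra bound, then the general-$n$ case of the lemma combined with $n-1$ applications of that bound for the power estimate, extending throughout via the four-term positive decomposition. One cosmetic slip: in your displayed application of the lemma for the iterated estimate the right-hand side should already carry the $(2,s)$-norm (the lemma is being applied with parameters $(s,t)$, not $(0,t)$), so the phrase ``bound each summand by its $(2,s)$-norm'' is redundant rather than a genuine step; the remainder of the argument is unaffected.
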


\begin{proof}
  We first apply Lemma~\ref{lem:laff_ineq} with $n=2$ and $s=0$. Taking into account the
  unimodular fact that $||\hat a^*||_2 = ||\hat a||_2$ for $\hat a\in \hat S$, we obtain
  \begin{eqnarray*}
    ||\Ff(a_1) \Ff(a_2)||_{2,t} &\leq& 2^t ||\Ff(a_2)|| ~ ||\Ff((1+L)^ta_1)||_2 + \\
    && ~~~~ + 2^t ||\Ff(a_1)||~ ||\Ff((1+L)^t a_2)||_2 
  \end{eqnarray*}
  for $a_1$, $a_2 \in \Ss_+$. By definition we have $||\Ff((1+L)^ta_i)||_2 =
  ||\Ff(a_i)||_{2,t}$ and using moreover Property RD, which holds for any exponent $t\geq s_0$,
  we get
  \begin{displaymath}
    ||\Ff(a_1) \Ff(a_2)||_{2,t} \leq 2^{t+1} C ||\Ff(a_1)||_{2,t} ||\Ff(a_2)||_{2,t} \text{.}
  \end{displaymath}
  Now this extends to any $a$, $a' \in \Ss$ like in the proof of Proposition~\ref{prp:growth}.
  This proves that $||\hat a \hat a'||_{2,t} \leq 2^{t+3} C ||\hat a||_{2,t} ||\hat a'||_{2,t}$
  for any $\hat a$, $\hat a' \in \hat\Ss$ and hence the first statement.
  
  Fix $s\geq s_0$ and let $K_s\geq 1$ be such that $||\hat a \hat a'||_{2,s} \leq K_s
  ||\hat a||_{2,s} ||\hat a'||_{2,s}$ for any $\hat a$, $\hat a' \in \hat\Ss$. We get from
  Lemma~\ref{lem:laff_ineq}, for any $(a_i)\in \Ss_+^n$ and $t\geq 0$:
  \begin{eqnarray} \label{eq:dem_laff}
    && ||\Ff(a_1) \cdots \Ff(a_n)||_{2,s+t} \leq 
    \\ \nonumber && ~~~~ \leq n^t K_s^{n-1} \ts\sum_i\ds
    ||\Ff(a_1)||_{2,s} \cdots ||\Ff(a_i)||_{2,s+t} \cdots ||\Ff(a_n)||_{2,s} \text{.}
  \end{eqnarray}
  Now let $b$ be an element of $\Ss$ and write again the ``canonical decomposition'' $b =
  \sum_{k=0}^3 i^k b_k$, where the elements $b_k \in \Ss_+$ are the positive and negative parts
  of $\Re b$ and $\Im b$. Using the triangle and Cauchy-Schwartz inequalities as well as
  (\ref{eq:dem_laff}) we write
  \begin{eqnarray*}
    &&\!\!\!\! ||\Ff(b)^n||^2_{2,s+t} \leq (\ts\sum_{(k_j)_j}\ds 
      ||\Ff(b_{k_1}) \cdots \Ff(b_{k_n})||_{2,s+t} )^2 \\
    && ~~~~ \leq (\ts\sum_{i, (k_j)_j}\ds n^t K_s^n ||\Ff(b_{k_1})||_{2,s} \cdots 
      ||\Ff(b_{k_i})||_{2,s+t} \cdots ||\Ff(b_{k_n})||_{2,s} )^2 \\
    && ~~~~ \leq n^{2t}K_s^{2n} n4^n \ts\sum_{i, (k_j)_j}\ds ||\Ff(b_{k_1})||^2_{2,s} 
    \cdots ||\Ff(b_{k_i})||^2_{2,s+t} \cdots ||\Ff(b_{k_n})||^2_{2,s} \text{.}
  \end{eqnarray*}
  As already mentioned we have $||b||_{2,s}^2 = \sum ||b_k||_{2,s}^2$ because $h_R$ is tracial.
  Hence the last upper bound has the right form:
  \begin{eqnarray*}
    n^{2t+1} (2K_s)^{2n} \ts\sum_i\ds ||\Ff(b)||^2_{2,s} \cdots 
    ||\Ff(b)||^2_{2,s+t} \cdots ||\Ff(b)||^2_{2,s} = \makebox[2cm]{} \\ =
    n^{2t+2} (2K_s)^{2n} ||\Ff(b)||^2_{2,s+t}~ ||\Ff(b)||_{2,s}^{2n-2}  \text{.}
  \end{eqnarray*} \par
\end{proof}

\begin{corollary}
  Let $L$ be a central length on the Hopf \Cst algebra $(S,\delta)$ of a unimodular
  discrete quantum group. Assume that $(S,\delta, L)$ has Property RD with exponent $s_0$.
  Then the canonical inclusion of Banach algebras $\hat H_L^s \subset \hat S$ induces
  isomorphisms in $K$-theory for any $s\geq s_0$. 
\end{corollary}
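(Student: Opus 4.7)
The plan is to deduce the $K$-theory isomorphism by applying an abstract Banach-algebraic criterion --- specifically Lafforgue's $K$-theory stability result \cite[prop.~1.2]{Lafforgue:RD} --- to the inclusion $\hat H_L^s \hookrightarrow \hat S$, feeding in precisely the growth estimate established in Proposition~\ref{prp:laff_alg}. This is the natural quantum analogue of the strategy Lafforgue used in the group case, and the point is that all analytic input has already been packaged into that proposition.

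First I would verify that $\hat H_L^s \hookrightarrow \hat S$ is a continuous dense inclusion of Banach algebras. Density is immediate because $\hat\Ss$ sits densely in both. Continuity follows from Property RD combined with $s \geq s_0$: for $\hat a\in\hat\Ss$ one has $||\hat a|| \leq C ||\hat a||_{2,s_0} \leq C ||\hat a||_{2,s}$. The Banach algebra structure of $\hat H_L^s$ is the first conclusion of Proposition~\ref{prp:laff_alg}.

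The heart of the argument is to feed the second conclusion of that proposition, namely the inequality $||\hat a^n||_{2,s+t} \leq C K_{s,t}^n ||\hat a||_{2,s}^n$ for $\hat a\in\hat\Ss$, $n\in\NN$ and any $t \geq 0$, into Lafforgue's criterion. This is exactly the kind of controlled growth of powers through the nested chain of Banach algebras $\hat H_L^{s+t} \subset \hat H_L^s \subset \hat S$ that forces spectral invariance of $\hat H_L^s$ inside $\hat S$. Once spectral invariance is obtained in each matrix algebra $M_N(\hat H_L^s) \subset M_N(\hat S)$, the standard density theorem (as recalled for instance in \cite[prop.~8.14]{Valette:livre_BC}) yields the desired $K$-theory isomorphism.

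The main obstacle is the passage to matrix algebras, which is needed for the $K_0$ statement. The growth inequality of Proposition~\ref{prp:laff_alg} transfers entrywise to $M_N$, up to dimension-dependent constants absorbed into $C$; since such constants do not affect the $n$-th root asymptotics on which spectral invariance depends, the growth hypothesis survives in $M_N(\hat H_L^s)$ and Lafforgue's criterion applies uniformly. No genuinely new analytic ingredient is needed beyond this careful transfer, the crucial quantum-specific work having been carried out in Lemma~\ref{lem:laff_ineq} and Proposition~\ref{prp:laff_alg}.
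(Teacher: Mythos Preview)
Your approach is the paper's approach: both follow Lafforgue's argument from \cite[prop.~1.2]{Lafforgue:RD}, using the growth estimate of Proposition~\ref{prp:laff_alg} as the analytic input and concluding via stability under holomorphic functional calculus. Your remark on matrix algebras is a point the paper leaves implicit.

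There is, however, one ingredient you have absorbed into the black box that the paper makes explicit, and which is genuinely needed. The growth inequality $||\hat a^n||_{2,s+t} \leq C K_{s,t}^n ||\hat a||_{2,s}^n$ by itself only yields, after taking $n$-th roots twice, that the spectral radii $\rho_{s+t}(\hat a)$ and $\rho_s(\hat a)$ in the various $\hat H_L^{s'}$ all coincide for $s' \geq s_0$. It does \emph{not} by itself compare these to the spectral radius $\rho_{\hat S}(\hat a)$ in $\hat S$. For that the paper invokes a H\"older-type interpolation inequality for the quantum Sobolev norms,
\[
  ||\hat a^n||_2 \;\geq\; ||\hat a^n||_{2,s}^{\,s'/(s'-s)}\, ||\hat a^n||_{2,s'}^{\,-s/(s'-s)} ,
\]
which combined with $||\hat a^n|| \geq ||\hat a^n||_2$ and $\rho_s = \rho_{s'}$ gives $\rho_{\hat S}(\hat a) \geq \rho_s(\hat a)$, hence equality. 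This interpolation step is easy (it is H\"older for the series $\sum_\alpha (1+l(\alpha))^{2s} ||p_\alpha a||_2^2$) and carries over to the quantum setting without change, but it is a separate ingredient from Proposition~\ref{prp:laff_alg} and should be named rather than hidden inside the phrase ``Lafforgue's criterion''.
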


\begin{proof}
  The proof goes exactly like in \cite[prop.~1.2]{Lafforgue:RD}. Denote by $\rho_s(\hat
  a)$ the spectral radius of $\hat a$ in $\hat H_L^s$. Taking the $n^{\rm th}$-root and
  letting $n$ go to infinity in the estimate of Proposition~\ref{prp:laff_alg}, we see
  that $\rho_{s+t}(\hat a) \leq K_{s,t} ||\hat a||_{2,s}$ for any $\hat a\in\hat\Ss$.
  Applying this to $\hat a^n$ and repeating the same process yields $\rho_{s+t}(\hat a)
  \leq \rho_s(\hat a)$, hence $\hat a$ has the same spectral radius in all the Banach
  algebras $\hat H_L^s$ with $s\geq s_0$.
  
  We use then an interpolation inequality for our Sobolev spaces which results, as in the
  classical case, from H\"older's inequality for the series \linebreak $\sum_\alpha
  {(1+l(\alpha))^{2s}} ||p_\alpha a||_2^2$. We obtain more precisely, for $s' > s \geq 0$:
  \begin{displaymath}
    \forall n\in\NN~~ ||\hat a^n|| \geq ||\hat a^n||_2 \geq ||\hat a^n||_{2,s}^{s'/(s'-s)}
    ||\hat a^n||_{2,s'}^{-s/(s'-s)} \text{,} 
  \end{displaymath}
  if $\hat a\in\hat\Ss$ is such that $\rho_{s'}(\hat a)\neq 0$. Again this yields an inequality
  between spectral radii, which reads $\rho_{\hat S}(\hat a) \geq \rho_s(\hat a) =
  \rho_{s'}(\hat a)$ when $s, s'\geq s_0$. Since $\hat H_L^s$ is dense in $\hat S$, this proves
  that it is stable under holomorphical calculus.
\end{proof}

\bibliographystyle{alpha} 
\small 
\bibliography{decay} 
\normalsize

\pagebreak
\thispagestyle{empty}

\begin{center}
    Roland {\sc Vergnioux} \\
    Mathematisches Institut (SFB 478) \\
    Westf\"alische Wilhelms-Universit\"at \\
    Einsteinstra\ss e 62 \\
    D--48149 M\"unster, Germany \\
    {\tt vergniou@math.uni-muenster.de}
\end{center}

\end{document}